\def\datum{July 20, 2020}
\def\chronologydatum{July 20, 2020}
\numberwithin{equation}{section}
\theoremstyle{plain}
 \newtheorem{theorem}{Theorem}[section]
 \newtheorem{lemma}[theorem]{Lemma}
 \newtheorem{corollary}[theorem]{Corollary}
\theoremstyle{definition}
 \newtheorem{remark}[theorem]{Remark}
\theoremstyle{remark}
\newcommand \Part[1] {\textup{Part}(#1)}
\newcommand \Equ[1] {\textup{Equ}(#1)}
\newcommand \sublat[1] {[#1]_{\kern-1pt\textup{lat}}}
\newcommand \lbrak {[\hskip-1.5pt[}
\newcommand \rbrak {\hskip0.5pt]\hskip-1.5pt]}
\newcommand \equ[2]{\lbrak{}#1,#2\rbrak{}^{\kern-0.5pt{\scriptscriptstyle\textup{e}}}}
\newcommand \kequ[1]{\lbrak{}#1\rbrak{}^{\kern-0.5pt{\scriptscriptstyle\textup{e}}}}
\newcommand \xequ[3]{\lbrak{}\langle #1,#2\rangle : #3\rbrak{}^{\kern-0.5pt{\scriptscriptstyle\textup{e}}}}
\newcommand \faequ[2]{\lbrak{}#1,#2\rbrak{}^{\kern-0.5pt{\scriptscriptstyle\textup{e}}}_{\kern-0.5pt\scriptscriptstyle\forall}}
\newcommand \bdelta{\delta^\bullet}
\newcommand \vonal {\noalign{\hrule}}
\newcommand \tuple [1] {\langle #1 \rangle}
\newcommand \pair [2] {\tuple{#1,#2}}
\newcommand \oal{\overline\alpha}
\newcommand \obe{\overline\beta}
\newcommand \oga{\overline\gamma}
\newcommand \ode{\overline\delta}
\newcommand \obmu{\overline{\pmb\mu}}
\newcommand \bmu{{\pmb\mu}}
\newcommand \eterm[2] {e_{#1,#2}}
\newcommand \fterm[2] {f_{#1,#2}}
\newcommand \gterm[3] {g^{(#1)}_{#2,#3}}
\newcommand \hterm[3] {h^{(#1)}_{#2,#3}}
\newcommand \enul {\Delta}
\newcommand \aHa {\mathcal H}
\newcommand \aGa {\mathcal G}
\newcommand \NN {{\mathbb N^+}}
\newcommand \maxs {\textup{Max}S}
\newcommand \Bell[1] {\textup{Bell}(#1)}
\newcommand \shd {{\delta^\sharp}}
\newcommand \vshd {{{\vec\delta\kern1pt}{}^\sharp}}
\newcommand \valpha{\vec\alpha}
\newcommand \vbeta{\vec\beta}
\newcommand \vgamma{\vec\gamma}
\newcommand \eeqref[1]{\overset{\eqref{#1}}{=}}
\newcommand \csm {{m_\ast}}
\newcommand \aph{Andr\'as}
\newcommand \ves {{\overline s}}
\newcommand \vew {{\overline w}}
\newcommand \ver {{\overline r}}
\newcommand \otau {{\overline \tau}}
\newcommand \length {\textup{length}}
\newcommand \distance[2] {\textup{distance}(#1,#2)}
\newcommand\set [1]{\{#1\}}
\newcommand \tbf[1] {\textbf{#1}}
\renewcommand \epsilon{\varepsilon}
\newcommand \red [1] {{\color{red}#1\color{black}}}
\newcommand \nothing [1] {}
\newcommand \magenta [1] {{\color{magenta}#1\color{black}}}
\begin{document}
\title[Partition lattices and authentication]
{Four-generated direct powers of partition lattices and authentication}

\author[G.\ Cz\'edli]{G\'abor Cz\'edli}
\address{University of Szeged, Bolyai Institute, Szeged,
Aradi v\'ertan\'uk tere 1, Hungary 6720}
\email{czedli@math.u-szeged.hu}
\urladdr{http://www.math.u-szeged.hu/~czedli/}

\date{\hfill {\tiny{\magenta{(\tbf{Always} check the author's website for possible updates!) }}}\  \red{\datum}}

\thanks{This research is supported by  NFSR of Hungary (OTKA), grant number K 134851}

\subjclass{06C10}

\keywords{Partition lattice, equivalence lattice, four-generated lattice,  Stirling number of the second kind, Bell number, secret key, authentication scheme, cryptography, crypto-system, commitment, semimodular lattice}

\dedicatory{Dedicated to Professor L\'aszl\'o Z\'adori on his sixtieth birthday}

\begin{abstract} For an integer $n\geq 5$, H.\ Strietz (1975) and  L.\ Z\'adori (1986) proved that
the lattice  $\Part n$  of all partitions of  $\set{1,2,\dots,n}$ is four-generated. Developing L.\ Z\'adori's particularly elegant construction further, we prove that even the $k$-th direct power $\Part n^k$ of $\Part n$ is four-generated for many but only finitely many exponents $k$. E.g., $\Part{100}^k$ is four-generated for every $k\leq 3\cdot 10^{89}$, and it has a four element generating set that is not an antichain for every $k\leq 1.4\cdot 10^{34}$.
In connection with these results, we outline a protocol how to use these lattices in authentication and secret key cryptography.
\end{abstract}

\maketitle

\section{Introduction}
This paper is dedicated to  L\'aszl\'o Z\'adori not only because of his birthday, but also because a nice  construction from his very first mathematical paper is heavily used here. 
Our starting point is that Strietz~\cite{strietz1,strietz2} proved in 1975 that 
\begin{equation}\left.
\parbox{7.5cm}{the lattice $\Part n$ of all partitions of the (finite)  set $\set{1,2,\dots,n}$ is a four-generated lattice.}
\,\,\right\}
\label{eqpbxstRszlT}
\end{equation} 
A decade later, Z\'adori~\cite{zadori} gave a very elegant proof of this result (and proved even more, which is not used in the present paper). Z\'adori's construction has opened lots of perspectives; this is witnessed by Chajda and Cz\'edli~\cite{chcz}, 
Cz\'edli~\cite{czedlismallgen,czedlifourgen,czedlioneonetwo,czgfourgeneqatoms},
Cz\'edli and Kulin~\cite{czgkulin},  Kulin~\cite{kulin}, and Tak\'ach~\cite{takach}. 

Our goal is to generalize \eqref{eqpbxstRszlT} from partition lattices to their direct powers; see Theorems~\ref{thmmain} and \ref{thmoot} later.
Passing from $\Part n$ to $\Part n^k$ has some content because of four reasons, which will be given with more details later;
here we only mention these reasons tangentially.
First, even the direct square of a four-generated lattice need not be four-generated. Second, if some direct power of a lattice is four-generated, then so are the original lattice and all of its other direct powers with smaller exponents; see Corollaries \ref{coroLprd} and \ref{corodjTd}.
Third, for each non-singleton finite lattice $L$, there is a (large) positive integer $k_0=k_0(L)$ such that for every $k\geq k_0$, the direct power $L^k$ is \emph{not} four-generated; this explains that the exponent is not arbitrary in our theorems. We admit that we could not 
determine the set $\set{k: \Part n^k\text{ is four-generated}}$, that is, we could not find 
the least $k_0$; this task will probably remain unsolved for long. Fourth, a whole section of this paper is devoted to the applicability of complicated lattices with few generators in Information Theory.

Although this paper has some links to Information Theory, it is primarily a \emph{lattice theoretical} paper. 
Note that only some elementary facts, regularly taught in graduate (and often in undergraduate) algebra, are needed about lattices. For those who know how to compute the join of two equivalence relations the paper is probably self-contained. If not, then  a small part of each of the monographs Burris and Sankappanavar \cite{burrsankapp},
Gr\"atzer~\cite{ggeneral,ggglt}, and Nation~\cite{nationbook} 
can be recommended; note that \cite{burrsankapp} and \cite{nationbook} are freely downloadable at the time of writing.

\subsection*{Outline}
The rest of the paper is structured as follows. Section~\ref{sectztrms}
gives the rudiments of partition lattices and recalls Z\'adori's construction in details; these details will be used in the subsequent two sections. Section~\ref{sectprod} formulates and prove our first result, Theorem~\ref{thmmain}, which
asserts that $\Part n^k$ is four-generated for certain values of $k$. In Section~\ref{sectootwo}, we formulate and prove Theorem~\ref{thmoot} about the existence of a four-element generating set of order type $1+1+2$ in $\Part n^k$.
Finally, Section~\ref{sectauth} offers a protocol  for authentication based on partition lattices and their direct powers; this protocol can also be used in secret key cryptography. 

%
%

\section{Rudiments and Z\'adori's construction}\label{sectztrms} 
Below, we are going to give some details in few lines for the sake of those not familiar with partition lattices and, in addition, we are going to fix the corresponding  notation.
For a set $A$, a set of pairwise disjoint nonempty subsets of $A$ is a \emph{partition} of $A$ if the union of these subsets, called \emph{blocks}, is $A$. For example, 
\begin{equation}
U=\set{\set{1,3},\set{2,4},\set{5}}
\label{eqUpPrrsB}
\end{equation}
is a partition of $A=\set{1,2,3,4,5}$. For pairwise distinct elements $a_1,\dots,a_k$ of $A$, the partition of $A$ with block $\set{a_1,\dots, a_k}$ such that all the other blocks are singletons will be denoted by
$\kequ{a_1,\dots a_k}$. Then, in our notation, $U$ from \eqref{eqUpPrrsB} is the same as
\begin{equation}
\equ13 + \equ24.
\label{eqczrggtbxmdGXP}
\end{equation}
For partitions $U$ and $V$ of $A$, we say that $U\leq V$ if and only if every block of $U$ is as subset of a (unique) block of $V$. With this ordering, the set of all partitions of $A$ turns into a lattice, which we denote by $\Part A$. 
For brevity, 
\begin{equation}
\text{$\Part n$ will stand for $\Part{\set{1,2,\dots,n}}$,}
\label{pbxPartnmM}
\end{equation}
and also for $\Part A$ when $A$ is a given set consisting of $n$ elements.  
 Associated with a partition $U$ of $A$, we define an \emph{equivalence relation} $\pi_U$ of $A$ as the collection of all pairs $(x,y)\in A^2$ such that $x$ and $y$ belong to the same block of $U$. As it is well known, the equivalence relations and the partitions of $A$ mutually determine each other, and $\pi_U\leq \pi_V$ (which is our notation for $\pi_U\subseteq \pi_V$) if and only if $U\leq V$. Hence, the \emph{lattice $\Equ A$ of all equivalence relations} of $A$ (in short, the \emph{equivalence lattice} of $A$) is isomorphic to $\Part A$. In what follows, we do not make a sharp distinction between a partition and the corresponding equivalence relation; no matter which of them is given, we can use the other one without warning. For example, \eqref{eqczrggtbxmdGXP} also denotes an equivalence relation associated with the partition given in \eqref{eqUpPrrsB}, provided the base set $\set{1,2,\dots,5}$ is understood. So we define and denote equivalences as the partitions above but we prefer to work in $\Equ A$ and $\Equ{n}=\Equ{\set{1,\dots,n}}$, because the lattice operations are easier to handle in $\Equ A$. For $\kappa,\lambda\in \Equ A$, the \emph{meet} and the \emph{join} of $\kappa$ and $\lambda$, denoted by $\kappa \lambda$ (or $\kappa\cdot\lambda$) and $\kappa+\lambda$, are the intersection and the transitive hull of the union of $\kappa$ and $\lambda$, respectively.  The advantage of this notation is that  the usual precedence rule allows us to write, say, $xy+xz$ instead of $(x\wedge y)\vee (x\wedge z)$.
\emph{Lattice terms} are composed from variables and join and meet operation signs in the usual way; for example, $f(x_1,x_2,x_3,x_4)=(x_1+x_2)(x_3+x_4)+(x_1+x_3)(x_2+x_4)$ is a quaternary lattice term. 
Given a lattice $L$ and $a_1,\dots, a_k\in L$, the \emph{sublattice generated} by $\set{a_1,\dots,a_k}$ is denoted and defined by 
\begin{equation}
\sublat{a_1,\dots,a_k}:=\set{f(a_1,\dots,a_k): a_1,\dots,a_k\in L,\,\,f\text{ is a lattice term}}.
\end{equation}
If there are pairwise distinct elements $a_1,\dots,a_k\in L$ such that $\sublat{a_1,\dots,a_k}=L$ then $L$ is said to be a \emph{$k$-generated lattice}.

Almost exclusively, we are going to define our equivalence relations  by (undirected simple, edge-coloured)  graphs. Every  horizontal thin straight edge is $\alpha$-colored but its color, $\alpha$, is not always indicated in the figures. The thin straight edges of slope 1, that is the southwest-northeast edges, are $\beta$-colored while the thin straight edges with slope $-1$, that is the southeast-northwest edges, are $\gamma$-colored. 
Finally, the thin \emph{solid} curved edges are $\delta$-colored. 
(We should disregard the \emph{dashed} ovals at this moment. Note that except for Figure~\ref{figd4}, every edge is thin.)
Figure~\ref{figd1} helps to keep this convention in mind. 
On the vertex set $A$, this figure and the other figures in the paper define an \emph{equivalence} (relation) $\alpha\in \Equ A$ in the following way: deleting all edges but the $\alpha$-colored ones, the components of the remaining graph are the blocks of the partition associated with $\alpha$. In other words, $\pair x y\in\alpha$ if and only if there is an $\alpha$-coloured path from vertex $x$ to vertex $y$ in the graph, that is, a path (of possibly zero length) all of whose edges are $\alpha$-colored. The equivalences $\beta$, $\gamma$, and $\delta$ are defined analogously. The success of Z\'adori's construction, to be discussed soon, lies in the fact of this visualization. Note that, to make our figures less crowded, the labels $\alpha,\dots,\delta$ are not always indicated but 
\begin{equation}
\text{our convention, shown in Figure \ref{figd1}, defines the colour of the edges} 
\label{eqtxtsdhClrsznzlD}
\end{equation}
even in this case.

\begin{figure}[htb] 
\centerline
{\includegraphics[scale=1.0]{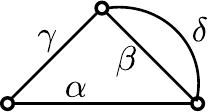}}
\caption{Standard notation for this paper
\label{figd1}}
\end{figure}%

Let us agree upon the following notation:
\begin{equation}\left.
\begin{aligned}
\sum_{\text{for all meaningful x}}&\equ{u_x}{v_x} \text{ will be denoted by }\cr
&\kern 2em\faequ{u_x}{v_x} \text{ or } \quad \faequ{u_y}{v_y};
\end{aligned}\,\right\}
\end{equation}
that is, each of $x$ and  $y$ in subscript or superscript position will mean that a join is formed for all meaningful values of these subscripts or superscript. If only a part of the meaningful subscripts or superscripts are needed in a join, then the following notational convention will be in effect:
\begin{equation}
\xequ{u^{(i)}}{v^{(i)}}{i\in I}\quad\text{ stands for }\quad
\sum_{i\in I}\equ{u^{(i)}}{v^{(i)}}.
\end{equation} 
For an integer $k\geq 2$ and the $(2k+1)$-element set 
\[Z=Z(2k+1):=\set{a_0,a_1,\dots,a_k, b_0,b_1,\dots, b_{k-1}},
\] 
we define
\begin{equation}
\begin{aligned}
&\alpha:=\kequ{a_0,a_1,\dots a_k} +\kequ{b_0,b_1,\dots b_{k-1}}=
\equ{a_x}{a_{x+1}}+\equ{b_y}{b_{y+1}}\cr
&\beta:=\faequ{a_x}{b_x}=\xequ{a_i}{b_i}{0\leq i\leq k-1},\cr
&\gamma:=\faequ{a_{x+1}}{b_x} = \xequ{a_{i+1}}{b_i}{0\leq i\leq k-1},\cr
&\delta:=\equ{a_0}{b_0}+\equ{a_k}{b_{k-1}}; 
\end{aligned}
\label{eqsBzTrGhxQ}
\end{equation}
see Figure~\ref{figd2}.
Then the system $\tuple{Z(2k+1);\alpha,\beta,\gamma,\delta}$ is called a $(2k+1)$-element \emph{Z\'adori configuration}. Its importance is revealed by the following lemma.

\begin{lemma}[Z\'adori~\cite{zadori}]\label{lemmazadori} For $k\geq 2$, 
$\sublat{\alpha,\beta,\gamma,\delta}=\Equ{Z(2k+1)}$, that is, the four partitions in \eqref{eqsBzTrGhxQ} of the Z\'adori configuration generate the lattice of all equivalences of $Z(2k+1)$. Consequently,
\begin{equation}
\sublat{\alpha,\beta,\gamma, \equ{a_0}{b_0}, \equ{a_k}{b_{k-1}}}=\Equ{Z(2k+1)}.
\label{eqzLhetzB}
\end{equation}
\end{lemma}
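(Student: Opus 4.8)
The plan is to reduce everything to the \emph{atoms} of $\Equ{Z(2k+1)}$, that is, to the equivalences $\equ x y$ whose only non-singleton block is a two-element set $\set{x,y}$. Since every equivalence is the join of the atoms below it and joins are available inside any sublattice, it suffices to prove that every atom $\equ x y$ lies in $\sublat{\alpha,\beta,\gamma,\delta}$. The geometry to keep in mind is that $\beta$ and $\gamma$ together form the single ``zigzag'' path
\[
a_0\,\text{---}\,b_0\,\text{---}\,a_1\,\text{---}\,b_1\,\text{---}\,\cdots\,\text{---}\,b_{k-1}\,\text{---}\,a_k,
\]
whose edges alternate in colour ($\equ{a_i}{b_i}\leq\beta$ and $\equ{a_{i+1}}{b_i}\leq\gamma$), while $\alpha$ collapses the two colour classes $\set{a_0,\dots,a_k}$ and $\set{b_0,\dots,b_{k-1}}$ of this path.

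The first, easy step produces the two ``endpoint'' atoms as meets. Because $\delta=\equ{a_0}{b_0}+\equ{a_k}{b_{k-1}}$ shares exactly the edge $\equ{a_0}{b_0}$ with $\beta$ and exactly the edge $\equ{a_k}{b_{k-1}}$ with $\gamma$, and since $a_k$ is a $\beta$-singleton while $a_0$ is a $\gamma$-singleton, I get
\[
\delta\beta=\equ{a_0}{b_0}\qquad\text{and}\qquad \delta\gamma=\equ{a_k}{b_{k-1}}.
\]
This already settles the ``Consequently'' part: the generating set on the left of \eqref{eqzLhetzB} is contained in $\sublat{\alpha,\beta,\gamma,\delta}$ by the two displayed meets, while conversely $\delta=\equ{a_0}{b_0}+\equ{a_k}{b_{k-1}}$ gives the reverse containment; hence the two sublattices coincide, and both equal $\Equ{Z(2k+1)}$ once the main assertion is proved.

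The heart of the argument is to manufacture every atom by ``marching'' along the zigzag. Two operations drive this: joining a known atom with $\beta$ or $\gamma$ reaches one vertex further along the path, and meeting a three-element ``interval block'' $\set{x,y,z}$ of consecutive path vertices with $\alpha$ isolates the unique same-colour pair among $x,y,z$, whereas meeting it with $\beta$ or $\gamma$ isolates the unique edge of that colour. Starting from $\equ{a_0}{b_0}$ one gets $\equ{a_0}{a_1}$ (a join with $\gamma$ followed by a meet with $\alpha$) and then $\equ{b_0}{a_1}$ (the meet of the block $\set{a_0,b_0,a_1}$ with $\gamma$), and symmetrically from the right endpoint. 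The aim of this phase is to propagate such steps so as to obtain all atoms at path-distance one (the edges) and at path-distance two. Once these are available, every remaining atom follows by induction on the distance along the path, using the identity
\[
\equ x y=\bigl(\equ x p+\equ p y\bigr)\bigl(\equ x q+\equ q y\bigr)\qquad(p\neq q,\ p,q\notin\set{x,y}),
\]
whose right-hand side is the meet of the two blocks $\set{x,p,y}$ and $\set{x,q,y}$ and hence equals $\equ x y$; choosing $p,q$ to be the two path-neighbours of $x$ on the way to $y$ strictly reduces the distance and closes the induction (here $2k+1\geq 5$ guarantees enough intermediate vertices).

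The main obstacle is the interior of the march. At the two ends the bootstrap is clean because $a_0$ and $a_k$ are singletons of $\gamma$ and $\beta$ respectively, so the relevant joins produce genuine \emph{three}-element blocks and the ensuing meets isolate a single new atom. In the interior, however, both endpoints of a known atom already lie in non-trivial blocks of both $\beta$ and $\gamma$, so a naive join with $\beta$ or $\gamma$ creates a \emph{four}-element block containing two edges of the same colour; meeting it with that colour (or with $\alpha$) then returns the wanted new atom joined with an already-known one, which a lattice cannot simply ``subtract''. Overcoming this is the technical core: one must orchestrate the joins and meets so that each step lands a single clean atom---for instance by intersecting two overlapping four-element interval blocks to carve out a three-element block, or by advancing atoms from both ends until the two fronts meet. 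This careful bookkeeping, which is precisely where Z\'adori's construction is ingenious, is the step I expect to demand the most care.
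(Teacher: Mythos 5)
Your reduction to atoms, the endpoint meets $\beta\delta=\equ{a_0}{b_0}$ and $\gamma\delta=\equ{a_k}{b_{k-1}}$, the derivation of \eqref{eqzLhetzB} from the main assertion (your route is in fact simpler than the paper's: you just use $\delta=\equ{a_0}{b_0}+\equ{a_k}{b_{k-1}}$, whereas the paper instead tracks the occurrences of $\ode$ in its terms because it needs the stronger replacement property \eqref{eqpbxZbntGhSwD} later), and the distance-induction endgame are all correct. But the proposal has a genuine gap exactly where you flag it: you never construct the interior zigzag edges $\equ{a_i}{b_i}$ and $\equ{a_{i+1}}{b_i}$ (for $i$ away from $0$ and $k$), and these are the indispensable base case of your induction --- note that even your distance-two atoms $\equ{a_i}{a_{i+1}}=\alpha\bigl(\equ{a_i}{b_i}+\equ{a_{i+1}}{b_i}\bigr)$ presuppose them. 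What you actually establish is the handful of atoms reachable from the two endpoints, an accurate diagnosis of why the one-atom-at-a-time march dies in the interior (a join with $\beta$ or $\gamma$ creates a four-element block, and meeting back with a colour returns the new atom joined with a known one, which a lattice cannot subtract), and two unproven suggestions for a repair. Deferring ``the technical core'' to future care is not a proof; that core \emph{is} Z\'adori's lemma. (Only for $k=2$ do the two endpoint bootstraps already produce every zigzag edge; for $k\geq 3$ atoms such as $\equ{a_1}{b_1}$ are provably out of reach of your toolkit of known atoms.)

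For the record, the paper closes this gap by a mechanism genuinely different from a single-atom march: it never isolates atoms while marching. The recursively defined terms in \eqref{eqZhgRsMks} grow \emph{joins} of same-colour edges --- a left front $g_i(\bmu)=\xequ{a_j}{b_j}{0\leq j\leq i}$, $h_i(\bmu)=\xequ{a_j}{b_{j-1}}{1\leq j\leq i}$ and a mirror-image right front $G_i(\bmu)$, $H_i(\bmu)$ --- each step being ``join with the other colour, meet with $\alpha$, join back the previous front, meet with the new colour''; since the entire front is carried along, the subtraction problem never arises. Atoms are carved out only at the very end, as meets of the two fronts, which overlap in exactly one edge: $\equ{a_i}{b_i}=g_i(\bmu)\cdot G_{k-i}(\bmu)$ and $\equ{a_i}{b_{i-1}}=h_i(\bmu)\cdot H_{k-i}(\bmu)$, as in \eqref{eqmlZsknNgRvTk} and \eqref{eqmlspnSzdRkMblk}. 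This is the precise realization of your vague second suggestion (``advancing from both ends until the two fronts meet''), with the crucial twist that what advances is a join of a whole colour-class prefix, not an atom. Once these edge atoms exist, the $\alpha$-edges and then all remaining atoms follow, either by your induction or by the paper's Circle Principle, Lemma~\ref{lemmaHamilt}.
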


\begin{figure}[htb] 
\centerline
{\includegraphics[scale=1.0]{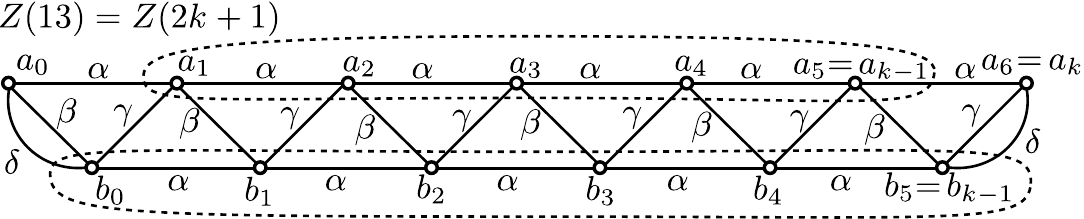}}
\caption{The Z\'adori configuration of odd size $2k+1$ with $k=6$
\label{figd2}}
\end{figure}%

We shall soon outline the proof of this lemma since we are going to use its details in the paper. But firstly, we formulate another lemma from Z\'adori \cite{zadori}, which has also been used in Cz\'edli \cite{czedlismallgen,czedlifourgen,czedlioneonetwo}
 and in other papers like Kulin~\cite{kulin}. We are going to recall its proof only for later reference.

\begin{lemma}[``Circle Principle'']\label{lemmaHamilt}
If $d_0,d_1,\dots,d_{n-1}$ are pairwise distinct elements of a set $A$ and $0\leq u<v\leq n-1$, then 
\begin{equation}\left.
\begin{aligned}
\equ {d_u}{d_v}=\bigl(\equ{d_{u}}{d_{u+1}} + \equ{d_{u+1}}{d_{u+2}}\dots + \equ{d_{v-1}}{d_{v}} \bigr) \cdot
 \bigl( \equ{d_{v}}{d_{v+1}} 
\cr
+ \dots + \equ{d_{n-2}}{d_{n-1}} +
\equ{d_{n-1}}{d_{0}}
 + \equ{d_{0}}{d_{1}}+ \dots + \equ{d_{u-1}}{d_{u}} \bigr)
\end{aligned}\,\right\}
\label{eqGbVrTslcdNssm}
\end{equation}
holds in $\Equ A$. 
If, in addition, $A=\set{d_0,d_1,\dots,d_{n-1}}$, then 
$\Equ A$ is generated by 
\[\set{\equ{d_{n-1}}{d_0} }\cup\bigcup_{0\leq i\leq n-2} \set{\equ{d_{i}}{d_i+1} }.
\]
\end{lemma}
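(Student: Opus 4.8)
The plan is to read \eqref{eqGbVrTslcdNssm} through the geometric picture suggested by the name: arrange $d_0,d_1,\dots,d_{n-1}$ around a circle so that the edges $\equ{d_i}{d_{i+1}}$ (subscripts taken modulo $n$) form a single $n$-cycle. The two factors on the right-hand side of \eqref{eqGbVrTslcdNssm} are then the joins of the consecutive-edge equivalences along the two complementary arcs joining $d_u$ to $d_v$, and the meet of the two resulting equivalences will turn out to be exactly $\equ{d_u}{d_v}$.

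First I would describe the two joins explicitly. Writing $P$ for the first factor $\equ{d_u}{d_{u+1}}+\dots+\equ{d_{v-1}}{d_v}$, its summands are the edges of the path $d_u,d_{u+1},\dots,d_v$; since a join of equivalences is the transitive hull of the union, $P$ is the equivalence whose only non-singleton block is $B_1:=\set{d_u,d_{u+1},\dots,d_v}$. Similarly, writing $Q$ for the second factor, its summands are the edges of the complementary path $d_v,d_{v+1},\dots,d_{n-1},d_0,\dots,d_u$, so $Q$ is the equivalence whose only non-singleton block is $B_2:=\set{d_v,d_{v+1},\dots,d_{n-1},d_0,\dots,d_u}$. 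Here the pairwise distinctness of the $d_i$ is what guarantees that each path is a simple path and that these are the full blocks.

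Next I would compute the meet $P\cdot Q=P\cap Q$. A pair $(x,y)$ with $x\neq y$ lies in it exactly when $\set{x,y}\subseteq B_1$ and $\set{x,y}\subseteq B_2$, i.e.\ $\set{x,y}\subseteq B_1\cap B_2$. Because $0\leq u<v\leq n-1$, the index sets $\set{u,u+1,\dots,v}$ and $\set{v,v+1,\dots,n-1,0,\dots,u}$ of $B_1$ and $B_2$ overlap in exactly the two indices $u$ and $v$, so $B_1\cap B_2=\set{d_u,d_v}$. Hence the only non-singleton block of $P\cap Q$ is $\set{d_u,d_v}$, which says precisely $P\cdot Q=\equ{d_u}{d_v}$; this is \eqref{eqGbVrTslcdNssm}. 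I expect this little index check to be the only place where any care is needed; everything else is the routine fact that a join of edge-equivalences along a simple path collapses exactly that path into one block.

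For the second assertion, assume $A=\set{d_0,\dots,d_{n-1}}$ and let $S$ be the sublattice generated by the $n$ circle edges $\equ{d_{n-1}}{d_0}$ and $\equ{d_i}{d_{i+1}}$ for $0\leq i\leq n-2$. By the formula just proved, every two-block equivalence $\equ{d_u}{d_v}$ with $u<v$ — that is, every atom of $\Equ A$ — lies in $S$. Since every $\theta\in\Equ A$ is the join of the atoms below it (joining all $\equ{a}{b}$ with $(a,b)\in\theta$ reconstructs each block by transitivity) and $S$ is closed under join, we get $\theta\in S$. Therefore $S=\Equ A$, as claimed.
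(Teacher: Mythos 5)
Your proof is correct and follows essentially the same route as the paper: the paper simply declares identity \eqref{eqGbVrTslcdNssm} trivial (your block computation $B_1\cap B_2=\set{d_u,d_v}$ is exactly the spelled-out version of that triviality) and derives the second assertion from the fact that $\Equ A$ is atomistic, just as you do by writing every equivalence as a join of the atoms $\equ{d_u}{d_v}$ below it.
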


\begin{proof}[Proof of Lemma~\ref{lemmaHamilt}] \eqref{eqGbVrTslcdNssm} is trivial. The second half of the lemma follows from the fact that for a finite $A$, the lattice $\Equ A$ is \emph{atomistic}, that is, each of its elements is the join of some atoms.
\end{proof}

\begin{proof}[Proof of Lemma~\ref{lemmazadori}]
On the set $\set{\oal,\obe,\oga,\ode}$ of variables, we are going to define several quaternary terms recursively. But first of all, we define the  quadruple
\begin{equation}
\obmu:=\tuple{\oal,\obe,\oga,\ode}
\label{eqMnPkBhJsZfPmF}
\end{equation}
of four variables with the purpose of abbreviating our quaternary terms $t(\oal,\obe,\oga,\ode)$  by $t(\obmu)$.
We let
\begin{equation}\left.
\begin{aligned}
g_0(\obmu)&:= \obe\,\ode\text{ (i.e.,}=\obe\wedge\ode), \cr
h_{i+1}(\obmu)&:=((g_i(\obmu)+\oga)\oal+g_i(\obmu))\oga\text{ for }i\geq 0,\cr
g_{i+1}(\obmu)&:=((h_{i+1}(\obmu)+\obe)\oal+h_{i+1}(\obmu))\obe\text{ for }i\geq 0,\cr
H_0(\obmu)&:=\oga\ode,\cr
G_{i+1}(\obmu)&:=((H_i(\obmu)+\obe)\oal + H_i(\obmu))\obe \text{ for }i\geq 0,\cr
H_{i+1}(\obmu)&:=((G_{i+1}(\obmu)+\oga)\oal+G_{i+1}(\obmu))\oga  \text{ for }i\geq 0.
\end{aligned}
\,\,\right\}
\label{eqZhgRsMks}
\end{equation}
For later reference, let us point out that
\begin{equation}\left.
\parbox{5.4cm}{in \eqref{eqZhgRsMks}, $\delta$ is used only twice: to define $g_0(\obmu)$ and to define $H_0(\obmu)$.}
\,\,\right\}
\label{eqpbxTwCzWn}
\end{equation}
Next, in harmony with \eqref{eqsBzTrGhxQ} and Figure~\ref{figd2},  we let 
\begin{equation}
\bmu:=\tuple{\alpha,\beta,\gamma,\delta}.
\label{eqMdhzBfTnQslwvP}
\end{equation}
Clearly,
\begin{equation}
\beta\delta=\equ{a_0}{b_0}\,\,\text{ and }\,\,\gamma\delta=\equ{a_k}{b_{k-1}}.
\label{eqNTjgMzdD}
\end{equation}
An easy induction shows that
\begin{equation}\left.
\begin{aligned}
g_i(\bmu)&:=\xequ{a_j}{b_j}{0\leq j\leq i} \text{ for }0\leq i\leq k-1, \cr
h_{i}(\bmu)&:=\xequ{a_j}{b_{j-1}}{1\leq j\leq i}\text{ for }1\leq i\leq k,\cr
H_i(\bmu)&:=\xequ{a_{k-j}}{b_{k-1-j}}{0\leq j\leq i} \text{ for }0\leq i\leq k-1, \cr
G_{i}(\bmu)&:=\xequ{a_{k-j}}{b_{k-j}}{1\leq j\leq i}\text{ for }1\leq i\leq k.\cr
\end{aligned}
\,\,\right\}
\label{eqmlZrbQshPrk}
\end{equation}
Next, for certain edges $\pair u v$ of the graph given in Figure~\ref{figd2}, we define a corresponding lattice term $\eterm u v(\obmu)$ as follows.
\begin{equation}\left.
\begin{aligned}
\eterm{a_i}{b_i}(\obmu)&:=g_i(\obmu)\cdot G_{k-i}(\obmu),\quad
\text{for }0\leq i\leq k-1,\cr
\eterm{a_i}{b_{i-1}}(\obmu)&:=h_i(\obmu)\cdot H_{k-i}(\obmu),\quad \text{for }1\leq i\leq k\cr
\eterm{a_i}{a_{i+1}}(\obmu)&:=\oal\cdot (\eterm{a_i}{b_i}(\obmu) + \eterm{a_{i+1}}{b_{i}}(\obmu)),\quad \text{for }0\leq i\leq k-1,\cr 
\eterm{b_i}{b_{i+1}}(\obmu)&:=\oal\cdot (\eterm{a_{i+1}}{b_i}(\obmu) + \eterm{a_{i+1}}{b_{i+1}}(\obmu)),\quad 0\leq i\leq k-2.
\end{aligned}
\,\right\}
\label{eqmlZsknNgRvTk}
\end{equation}
The first two equalities below follow from \eqref{eqmlZrbQshPrk}, while 
the third and the fourth from the first two.
\begin{equation}\left.
\begin{aligned}
\eterm{a_i}{b_i}(\bmu)&=\equ{a_i}{b_i},\quad
\text{for }0\leq i\leq k-1,\cr
\eterm{a_i}{b_{i-1}}(\bmu)&=\equ{a_i}{b_{i-1}}\quad \text{for }1\leq i\leq k\cr
\eterm{a_i}{a_{i+1}}(\bmu)&=
\equ{a_i}{a_{i+1}}\quad \text{for }0\leq i\leq k-1,\cr 
\eterm{b_i}{b_{i+1}}(\bmu)&=
\equ{b_i}{b_{i+1}}, \quad 0\leq i\leq k-2.
\end{aligned}
\,\right\}
\label{eqmlspnSzdRkMblk}
\end{equation}
Finally, let 
\begin{equation}\tuple{d_0,d_1,\dots,d_{n-1}}:=\tuple{a_0,a_1,\dots,a_k,b_{k-1},b_{k-2},\dots, b_0}.
\label{eqdsrzLmJnszpkJvStT}
\end{equation}  
In harmony with \eqref{eqGbVrTslcdNssm}, we define the following term
\begin{equation}\left.
\begin{aligned}
\eterm{d_u}{d_v}(\obmu):=\bigl(\eterm{d_{u}}{d_{u+1}}(\obmu) + \eterm{d_{u+1}}{d_{u+2}}(\obmu)\dots + \eterm{d_{v-1}}{d_{v}}(\obmu) \bigr) \cdot
 \bigl( \eterm{d_{v}}{d_{v+1}}(\obmu) 
\cr
+ \dots + \eterm{d_{n-2}}{d_{n-1}}(\obmu) +
\eterm{d_{n-1}}{d_{0}}
 + \eterm{d_{0}}{d_{1}}(\obmu)+ \dots + \eterm{d_{u-1}}{d_{u}}(\obmu) \bigr)
\end{aligned}\,\right\}
\label{eqnbVrtzstVcX}
\end{equation}
for $0\leq u< v\leq n-1= 2k$. Combining \eqref{eqGbVrTslcdNssm},  \eqref{eqmlspnSzdRkMblk}, and \eqref{eqnbVrtzstVcX}, we obtain that
\begin{equation}
\eterm{d_u}{d_v}(\bmu)= \equ{d_u}{d_v}.
\label{eqczhhndPMtlvRdwdb}
\end{equation}
Based on \eqref{eqpbxTwCzWn}, note at this point that in \eqref{eqZhgRsMks}, \eqref{eqmlZsknNgRvTk}, and \eqref{eqnbVrtzstVcX}, $\delta$ is used only twice: to define $g_0(\obmu)$ and to define $H_0(\obmu)$. Consequently, taking \eqref{eqNTjgMzdD} also into account, we conclude that 
\begin{equation}\left.
\parbox{9.0cm}{equality \eqref{eqczhhndPMtlvRdwdb} remains valid if $\delta$, the fourth component of $\bmu$,  is replaced by any other partition whose meet with $\beta$ and that with $\gamma$ are $\equ {a_0}{b_0}$ and  $\equ {a_k}{b_{k-1}}$, respectively.} \,\,\right\}
\label{eqpbxZbntGhSwD}
\end{equation}
Since every atom of $\Equ{Z(2k+1)}$ is of the form \eqref{eqczhhndPMtlvRdwdb} and $\Equ{Z(2k+1)}$ is an atomistic lattice, $\sublat{\alpha,\beta,\gamma,\delta}=\Equ{Z(n)}$.
In virtue of \eqref{eqpbxZbntGhSwD} and since $\ode$ has been used only twice, \eqref{eqzLhetzB} also holds, completing the proof of Lemma~\ref{lemmazadori}.
\end{proof}

\begin{figure}[htb] 
\centerline
{\includegraphics[scale=1.0]{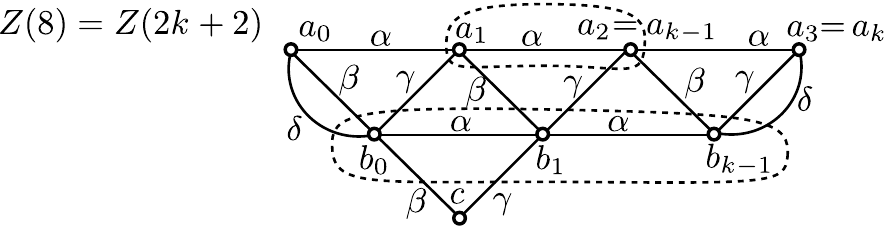}}
\caption{A  configuration for even size $2k+2$ with $k=3$
\label{figd3}}
\end{figure}%

Next, for $k\geq 2$,  we add a new vertex $c$, a $\beta$-colored edge $\pair{b_0}{c}$, and a $\gamma$-colored edge  $\pair{b_{2}}{c}$
to $Z(2k+1)$ to obtain $Z(2k+2)$, see Figure~\ref{figd3}. This configuration is different from what Z\'adori~\cite{zadori} used for the even case; our approach by Figure~\ref{figd3} is simpler and fits better to our purposes. Again, the dashed curved edges of Figure~\ref{figd3} should be disregarded until otherwise is stated.

\begin{lemma}\label{lemmazeven}
For $n=2k+2\geq 6$, we have that $\Equ{Z(n)}=\sublat{\alpha,\beta,\gamma, \delta}$.
\end{lemma}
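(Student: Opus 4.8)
The plan is to bootstrap off Lemma~\ref{lemmazadori}. Since $Z(2k+2)$ arises from $Z(2k+1)$ only by adjoining the single vertex $c$ together with one $\beta$-edge $\equ{b_0}{c}$ and one $\gamma$-edge $\equ{b_2}{c}$, the four generators of $\Equ{Z(2k+2)}$ restrict on the old vertex set to the generators of the Z\'adori configuration $Z(2k+1)$. So the two things I must do are: verify that the new point $c$ does not interfere with Z\'adori's term computations (hence all atoms on the old vertices reappear), and then recover the few ``edges'' that attach $c$ to the rest.

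\textbf{Step 1 (the old atoms survive).} First I would note that $c$ is a singleton block of both $\alpha$ and $\delta$, because the two adjoined edges are $\beta$- and $\gamma$-coloured. I claim that, evaluated on the new quadruple $\bmu=\tuple{\alpha,\beta,\gamma,\delta}$ of $\Equ{Z(2k+2)}$, every term $g_i,h_i,G_i,H_i$ and $\eterm uv$ of \eqref{eqZhgRsMks}--\eqref{eqnbVrtzstVcX} yields exactly the same equivalence as in the odd case, \emph{with $c$ remaining a singleton block throughout}. This follows by a simultaneous induction over the four families. The base terms $g_0=\beta\delta$ and $H_0=\gamma\delta$ keep $c$ isolated because $c$ is $\delta$-isolated; and in each recursive step the only subterm that could attach $c$ to another vertex is the inner join $(\,\cdot\,+\beta)$ or $(\,\cdot\,+\gamma)$, which is immediately met with $\alpha$, and meeting with $\alpha$ re-isolates $c$. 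The two remaining operations of each step are a join with an already $c$-isolated term ($g_i,h_{i+1},H_i$ or $G_{i+1}$) and a final meet with $\beta$ or $\gamma$; neither can re-attach $c$. The terms $\eterm uv$ are likewise built by meets and by $\alpha$-meets, so they too leave $c$ isolated. Consequently \eqref{eqczhhndPMtlvRdwdb} still holds, i.e.\ $\eterm{d_u}{d_v}(\bmu)=\equ{d_u}{d_v}$ on the old vertices, and the Circle Principle (Lemma~\ref{lemmaHamilt}) places \emph{every} atom $\equ xy$ with $x,y\in Z(2k+1)$ into $\sublat{\alpha,\beta,\gamma,\delta}$.

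\textbf{Step 2 (attaching $c$).} With all old atoms in hand, I would manufacture two atoms joining $c$ to distinct old vertices by elementary meet/join manipulation. Enlarging the $\beta$-block $\set{a_0,b_0,c}$ by the old atom $\equ{b_0}{b_2}$ produces $\beta+\equ{b_0}{b_2}$, whose block $\set{a_0,b_0,a_2,b_2,c}$ meets the $\gamma$-block $\set{a_3,b_2,c}$ in exactly $\set{b_2,c}$; hence $\equ{b_2}{c}=(\beta+\equ{b_0}{b_2})\gamma\in\sublat{\alpha,\beta,\gamma,\delta}$. Next, $\beta\cdot(\equ{b_0}{b_2}+\equ{b_2}{c})$ is the meet of the blocks $\set{a_0,b_0,c}$ and $\set{b_0,b_2,c}$, which is $\set{b_0,c}$, so $\equ{b_0}{c}$ is also in the sublattice. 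Finally, for an arbitrary old vertex $x$ the two-path identity $\equ xc=(\equ x{b_0}+\equ{b_0}{c})\cdot(\equ x{b_2}+\equ{b_2}{c})$ (a meet of the blocks $\set{x,b_0,c}$ and $\set{x,b_2,c}$, equal to $\set{x,c}$) gives $\equ xc$, using that $\equ x{b_0}$ and $\equ x{b_2}$ are old atoms. Thus every atom of $\Equ{Z(2k+2)}$ lies in the sublattice, and since $\Equ{Z(2k+2)}$ is atomistic we conclude $\sublat{\alpha,\beta,\gamma,\delta}=\Equ{Z(2k+2)}$.

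The step I expect to be delicate is Step~1: one must be certain that adjoining $c$ to $\beta$ and $\gamma$ cannot silently drag $c$ into the recursively defined terms, and the whole argument hinges on the pattern that every potential attachment of $c$ is immediately followed by a meet with $\alpha$, under which $c$ is isolated. The inductive bookkeeping therefore has to be carried out for all four families $g_i,h_i,G_i,H_i$ at once. Step~2 is then routine, though the indices require $k\ge 3$ so that $a_3$ and $b_2$ exist; the single leftover value $n=6$ (that is, $k=2$) can be settled directly, or by running the same meet/join trick with the $\gamma$-edge landing on $b_1$ in place of $b_2$.
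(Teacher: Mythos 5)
Your overall architecture (old atoms survive; attach $c$; finish by atomisticity) is the same as the paper's, but both steps are carried out by different means, and each contains a genuine soft spot. In Step 1 the gap is this: your induction establishes that $c$ remains a singleton block of every intermediate value, but what you claim and need is stronger, namely that every intermediate value restricts to \emph{exactly} the odd-case equivalence. Isolation of $c$ does not imply that. In the inner joins $g_i(\bmu)+\gamma$ and $h_{i+1}(\bmu)+\beta$ of \eqref{eqZhgRsMks}, the second joinand genuinely attaches $c$, and a path running through $c$ could in principle relate two old vertices $x,y$ that were unrelated before; if $\pair{x}{y}\in\alpha$, that spurious pair survives the ensuing meet with $\alpha$ even though $c$ itself gets re-isolated. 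So ``meeting with $\alpha$ re-isolates $c$'' cannot, by itself, close the induction. The missing (easy) observation is: whenever $\beta$ or $\gamma$ occurs as a joinand, the other joinand is $c$-isolated by the inductive hypothesis, so both edges at $c$ on any path through $c$ come from $\beta$ (resp.\ $\gamma$); by transitivity of $\beta$ (resp.\ $\gamma$) the path can be shortcut around $c$, hence no new pairs of old vertices arise, and restriction to $Z(2k+1)$ commutes with every operation in \eqref{eqZhgRsMks}--\eqref{eqnbVrtzstVcX}. The paper avoids this bookkeeping entirely with a trick worth noting: it replaces the generators by the terms $\beta^\ast:=\beta(\alpha+\delta)$ and $\gamma^\ast:=\gamma(\alpha+\delta)$; since $\set{c}$ is a block of $\alpha+\delta$, the starred equivalences have $c$ isolated from the outset, the equivalences with singleton block $\set{c}$ form a sublattice isomorphic to $\Equ{Z(2k+1)}$, and Lemma~\ref{lemmazadori} then applies verbatim, with no induction over term structure at all.

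In Step 2 the issue is the configuration itself. You followed the prose (``a $\gamma$-colored edge $\pair{b_2}{c}$''), and for that reading your computations are correct when $k\geq 3$: indeed $(\beta+\equ{b_0}{b_2})\gamma=\equ{b_2}{c}$, and then $\equ{b_0}{c}$ and the two-path identity follow. But that prose contradicts the paper's own proof: the terms \eqref{eqdzltGmrkszTnldpfGl} satisfy \eqref{eqhfnWhtjCxT} only if the $\gamma$-block of $c$ is $\set{a_2,b_1,c}$, i.e.\ the new $\gamma$-edge is $\pair{b_1}{c}$; with the edge at $b_2$, the first term of \eqref{eqdzltGmrkszTnldpfGl} evaluates to $\enul$ and the second to $\equ{b_2}{c}$, and moreover $b_2$ does not even exist when $k=2$, although the lemma is claimed for all $n\geq 6$. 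More importantly, your fallback for $k=2$ --- ``the same trick with the $\gamma$-edge landing on $b_1$'' --- fails: with that edge, $\beta+\equ{b_0}{b_1}$ has the block $\set{a_0,a_1,b_0,b_1,c}$, which swallows the whole $\gamma$-block $\set{a_1,b_0}$, so $(\beta+\equ{b_0}{b_1})\gamma=\equ{b_1}{c}+\equ{a_1}{b_0}$ is not an atom. For that configuration you need different terms, e.g.\ the paper's $\beta\cdot(\gamma+\equ{a_0}{a_2})=\equ{a_0}{c}$ and $\gamma\cdot(\beta+\equ{a_0}{a_2})=\equ{a_2}{c}$, which work for every $k\geq 2$; alternatively, settling $n=6$ by a direct check, as you also propose, is legitimate. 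With these two repairs your proof is correct, and it is a genuinely more hands-on alternative to the paper's route (the starring trick plus the Circle Principle, Lemma~\ref{lemmaHamilt}, applied to the Hamiltonian cycle through $c$).
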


\begin{proof} With the short terms $\oal^\ast{}=\oal$, $\obe^\ast{}:=\obe(\oal+\ode)$, $\oga^\ast{}:=\oga(\oal+\ode)$, and $\ode^\ast{}:=\ode$, we define
$\obmu^\ast{}:=\tuple{\oal^\ast{},\obe^\ast{},\oga^\ast{},\ode^\ast{}}$. For each term $t$ defined in \eqref{eqZhgRsMks} and \eqref{eqmlZsknNgRvTk}, we define a term $t^\ast{}$ as 
$t^\ast{}(\obmu):=t(\obmu^\ast{})$.
We also need the corresponding partitions $\alpha^\ast{}:=\alpha$, $\beta^\ast{}:=\beta(\alpha+\delta)$, $\gamma^\ast{}:=\gamma(\alpha+\delta)$, 
$\delta^\ast{}:=\delta$, and the quadruple $\bmu^\ast{}:=\tuple{\alpha^\ast{},\beta^\ast{},\gamma^\ast{},\delta^\ast{}}$.
Apart from the singleton block $\set c$,
they are the same as the partitions considered in Lemma~\ref{lemmazadori} for $Z(2k+1)$. Hence, it follows that 
\eqref{eqmlZrbQshPrk}, \eqref{eqmlspnSzdRkMblk}, and \eqref{eqczhhndPMtlvRdwdb} hold with $\bmu^\ast{}$ instead of $\bmu$.
In other words, they hold with $\bmu$ if the terms $t$ are replaced by the corresponding terms $t^\ast$. 
In particular,  \eqref{eqczhhndPMtlvRdwdb} is reworded as follows:
\begin{equation}
\eterm x y^\ast(\bmu)=\equ x y\quad\text{ for all }x,y\in Z(n)\setminus\set c.
\label{eqmsSzvKlJjJrlcsPsm}
\end{equation}
So if we define (without defining their ``asterisk-free versions'' $\eterm{a_0}{c}$ and $\eterm{a_2}{c}$) the terms
\begin{equation}
\eterm{a_0}{c}^\ast(\obmu):=\obe\cdot\bigl(\oga+\eterm{a_0}{a_{2}}(\obmu^\ast{}) \bigr)\text{ and } 
\eterm{a_{2}}{c}^\ast(\obmu):=\oga\cdot\bigl(\obe+\eterm{a_0}{a_{2}}(\obmu^\ast{}) \bigr),
\label{eqdzltGmrkszTnldpfGl}
\end{equation}
then it follows easily that 
\begin{equation}
\eterm{a_0}{c}^\ast(\bmu)= \equ{a_0}{c}\,\,\text{ and }\,\,
 \eterm{a_2}{c}^\ast(\bmu)= \equ{a_2}{c}; 
\label{eqhfnWhtjCxT}
\end{equation}
remark that in addition to \eqref{eqmsSzvKlJjJrlcsPsm},  \eqref{eqhfnWhtjCxT} also belongs to the scope of \eqref{eqpbxZbntGhSwD}.
Let 
\begin{equation}
\tuple{d_0,d_1,\dots,d_{n-1}}
:=\tuple{a_0,c,a_2,a_3,\dots,a_{k},b_{k-1},b_{k-2},\dots,b_1,a_1,b_0}.
\label{eqchtFkJknFsgsN}
\end{equation}
Similarly to \eqref{eqnbVrtzstVcX} but now
based on \eqref{eqchtFkJknFsgsN} rather than  \eqref{eqdsrzLmJnszpkJvStT}, we define the following term (without defining its ``non-asterisked'' $\fterm{d_u}{d_v}$ version)
\begin{equation}\left.
\begin{aligned}
\fterm{d_u}{d_v}^\ast (\obmu):=\bigl(\eterm{d_{u}}{d_{u+1}}^\ast (\obmu) + \eterm{d_{u+1}}{d_{u+2}}^\ast (\obmu)\dots + \eterm{d_{v-1}}{d_{v}}^\ast (\obmu) \bigr) \cdot
 \bigl( \eterm{d_{v}}{d_{v+1}}^\ast (\obmu) 
\cr
+ \dots + \eterm{d_{n-2}}{d_{n-1}}^\ast (\obmu) +
\eterm{d_{n-1}}{d_{0}}
 + \eterm{d_{0}}{d_{1}}^\ast (\obmu)+ \dots + \eterm{d_{u-1}}{d_{u}}^\ast (\obmu) \bigr)
\end{aligned}\,\right\}
\label{eqnvlqLY}
\end{equation}
for $0\leq u<v < n$.  By Lemma~\ref{lemmaHamilt}, \eqref{eqmsSzvKlJjJrlcsPsm}, \eqref{eqhfnWhtjCxT}, and \eqref{eqnvlqLY},  we obtain that 
\begin{equation}
\fterm x y^\ast(\bmu)=\equ x y\quad\text{ for all }x\neq y\in Z(n).
\label{eqnmsdsulTVcX}
\end{equation}
The remark right after \eqref{eqhfnWhtjCxT} allows us to note that 
\begin{equation}
\text
{\eqref{eqnmsdsulTVcX} also belongs to the scope of \eqref{eqpbxZbntGhSwD}.}
\label{eqnZhgjdlSrkhllRw}
\end{equation}
Finally, \eqref{eqnmsdsulTVcX} implies Lemma~\ref{lemmazeven} since $\Equ{Z(n)}$ is atomistic. 
\end{proof}

\section{Generating direct powers of partition lattices}\label{sectprod}
Before formulating the main result of the paper,  we recall some notations and concepts. The lower integer part of a real number $x$ will be denoted by $\lfloor x\rfloor$; for example, 
$\lfloor \sqrt 2\rfloor=1$ and $\lfloor  2\rfloor=2$. The set of positive integer numbers will be denoted by $\NN$. For $n\in\NN$, the number of partitions of the $n$-element set $\set{1,2,\dots,n}$, that is, the size of $\Part n\cong \Equ n$ is the so-called $n$-th \emph{Bell number}; it will be denoted by $\Bell n$.
The number of partitions of $n$ objects with exactly $r$ blocks is denoted by $S(n,r)$; it is the \emph{Stirling number of the second kind} with parameters $n$ and $r$. Note that $S(n,r)\geq 1$ if and only if $1\leq r\leq n$; otherwise $S(n,r)$ is zero. Clearly, $\Bell n=S(n,1)+S(n,2)+\dots +S(n,n)$. 
Let 
\begin{equation}
\text{$\maxs(n)$ denote the maximal element  of the set $\set{S(n,r): r\in\NN}$.}
\label{eqtxtmMxSdfLm}
\end{equation}
We know from 
Rennie and Dobson~\cite[page 121]{renniedobson} that
\begin{equation}
\log \maxs(n)= n \log n - n\log \log n - n + O\left( n\cdot {\frac{\log\log n}{\log n}} \right).
\end{equation}
Hence, $\maxs(n)$ is quite large; see Tables~\eqref{tablerdDbsa}--\eqref{tablerdDbsc} and \eqref{tablerdDbsg} for some of its values; note that those given in exponential form are only rounded values. 
Some rows occurring in these tables, computed by Maple V. Release 5 (1997) under Windows 10, will be explained later.

\allowdisplaybreaks{
\begin{align}
&
\lower  0.8 cm
\vbox{\tabskip=0pt\offinterlineskip
\halign{\strut#&\vrule#\tabskip=1pt plus 2pt&
#\hfill& \vrule\vrule\vrule#&
\hfill#&\vrule#&
\hfill#&\vrule#&
\hfill#&\vrule#&
\hfill#&\vrule#&
\hfill#&\vrule#&
\hfill#&\vrule#&
\hfill#&\vrule#&
\hfill#&\vrule#&
\hfill#&\vrule#&
\hfill#&\vrule#&
\hfill#&\vrule#&
\hfill#&\vrule\tabskip=0.1pt#&
#\hfill\vrule\vrule\cr
\vonal\vonal\vonal\vonal
&&\hfill$n$&&$\,1$&&$\,2$&&$\,3$&&$\,4$&&$5$&&$6$&&$7$&&$8$&&$9$&&$10$&&$11$&&$12$&
\cr\vonal\vonal
&&$\maxs(n)$&&$1$&&$1$&&$3$&&$7$&&$25$&&$90$&&$350$&&$1\,701$&&$7\,770$&&$42\,525$&&$246\,730$&&$1\,379\,400$&
\cr\vonal
&&\hfill$m(n)$&&$\phantom a$&&$\phantom b$&&$\phantom c$&&$\phantom d$&&$1$&&$1$&&$3$&&$3$&&$21$&&$21$&&$175$&&$175$&\cr
\vonal
&&\hfill$\csm(n)$&&$\phantom a$&&$\phantom b$&&$\phantom c$&&$\phantom d$&&$\phantom d$&&$\phantom e$&&$1$&&$1$&&$1$&&$1$&&$2$&&$2$&\cr
\vonal\vonal\vonal\vonal
}} 
\label{tablerdDbsa}
\\
&
\lower  0.8 cm
\vbox{\tabskip=0pt\offinterlineskip
\halign{\strut#&\vrule#\tabskip=1pt plus 2pt&
#\hfill& \vrule\vrule\vrule#&
\hfill#&\vrule#&
\hfill#&\vrule#&
\hfill#&\vrule#&
\hfill#&\vrule#&
\hfill#&\vrule\tabskip=0.1pt#&
#\hfill\vrule\vrule\cr
\vonal\vonal\vonal\vonal
&&\hfill$n$&&$13$&&$14$&&$15$&&$16$&&$17$&
\cr\vonal\vonal
&&$\maxs(n)$&&$9\,321\,312$&&$63\,436\,373$&&$420\,693\,273$&&$3\,281\,882\,604$&&$25\,708\,104\,786$&
\cr
\vonal
&&\hfill $m(n)$&&$2\,250$&&$2\,250$&&$31\,500$&&$31\,500$&&$595\,350$&
\cr
\vonal
&&\hfill $\csm(n)$&&$2$&&$2$&&$9$&&$9$&&$9$&
\cr
\vonal\vonal\vonal\vonal
}}
\label{tablerdDbsb} 
\\
&
\lower  0.8 cm
\vbox{\tabskip=0pt\offinterlineskip
\halign{\strut#&\vrule#\tabskip=1pt plus 2pt&
#\hfill& \vrule\vrule\vrule#&
\hfill#&\vrule#&
\hfill#&\vrule#&
\hfill#&\vrule\tabskip=0.1pt#&
#\hfill\vrule\vrule\cr
\vonal\vonal\vonal\vonal
&&\hfill$n$&&$18$&&$19$&&$20$&
\cr\vonal\vonal
&&$\maxs(n)$&&$1\,974\,624\,834\,000$&&$1\,709\,751\,003\,480$&&$15\,170\,932\,662\,679$&
\cr\vonal
&&\hfill$m(n)$&&$595\,350$&&$13\,216\,770$&&$13\,216\,770$&
\cr\vonal
&&\hfill$\csm(n)$&&$9$&&$49$&&$49$&
\cr\vonal\vonal\vonal\vonal
}}
\label{tablerdDbsc} 
\\
&
\lower  0.8 cm
\vbox{\tabskip=0pt\offinterlineskip
\halign{\strut#&\vrule#\tabskip=1pt plus 2pt&
#\hfill& \vrule\vrule\vrule#&
\hfill#&\vrule#&
\hfill#&\vrule#&
\hfill#&\vrule#&
\hfill#&\vrule#&
\hfill#&\vrule\tabskip=0.1pt#&
#\hfill\vrule\vrule\cr
\vonal\vonal\vonal\vonal
&&\hfill$n$&&$21$&&$22$&&$23$&&$24$&&$25$&
\cr\vonal\vonal
&&\hfill$m(n)$&&$330\,419\,250$&&$330\,419\,250$&&$10\,492\,193\,250 $&&$10\,492\,193\,250 $&& $ 3.40\cdot 10^{11} $ &
\cr\vonal
&&\hfill$\csm(n)$&&$49$&&$49$&&$625$&&$625 $&& $625 $ &
\cr\vonal\vonal\vonal\vonal
}}
\label{tablerdDbsd} 
\\
%
%
&
\lower  0.8 cm
\vbox{\tabskip=0pt\offinterlineskip
\halign{\strut#&\vrule#\tabskip=1pt plus 2pt&
#\hfill& \vrule\vrule\vrule#&
\hfill#&\vrule#&
\hfill#&\vrule#&
\hfill#&\vrule#&
\hfill#&\vrule#&
\hfill#&\vrule#&
\hfill#&\vrule\tabskip=0.1pt#&
#\hfill\vrule\vrule\cr
\vonal\vonal\vonal\vonal
&&\hfill$n$&&$26$&&$27$&&$28$&&$29$&&$30$&&$31$&
\cr\vonal\vonal
&&\hfill$m(n)$&&$3.40\cdot 10^{11}$&&$1.29\cdot 10^{13}$&&$1.29\cdot 10^{13}$&&$5.91\cdot 10^{14}$&& $5.91\cdot 10^{14}$ && $2.67\cdot 10^{16}$ &
\cr\vonal
&&\hfill$\csm(n)$&&$625$&&$8100$&&$8100$&&$8100$&& $8100$ && $122500 $ &
\cr\vonal\vonal\vonal\vonal
}}
\label{tablerdDbse} 
\\
%
%
&
\lower  0.8 cm
\vbox{\tabskip=0pt\offinterlineskip
\halign{\strut#&\vrule#\tabskip=1pt plus 2pt&
#\hfill& \vrule\vrule\vrule#&
\hfill#&\vrule#&
\hfill#&\vrule#&
\hfill#&\vrule#&
\hfill#&\vrule#&
\hfill#&\vrule#&
\hfill#&\vrule\tabskip=0.1pt#&
#\hfill\vrule\vrule\cr
\vonal\vonal\vonal\vonal
&&\hfill$n$&&$32$&&$33$&&$34$&&$35$&&$36$&&$37$&
\cr\vonal\vonal
&&\hfill$m(n)$&&$2.67\cdot 10^{16}$&&$1.38\cdot 10^{18}$&&$1.38\cdot 10^{18}$&&$8.44\cdot 10^{19}$&& $8.44\cdot 10^{19}$ && $5.08\cdot 10^{21}$ &
\cr\vonal
&&\hfill$\csm(n)$&&$122500$&&$122500$&&$122500$&&$2893401$&& $2893401$ && $2893401$ &
\cr\vonal\vonal\vonal\vonal
}}
\label{tablerdDbsf} 
\\
%
&
\lower  0.8 cm
\vbox{\tabskip=0pt\offinterlineskip
\halign{\strut#&\vrule#\tabskip=1pt plus 2pt&
#\hfill& \vrule\vrule\vrule#&
\hfill#&\vrule#&
\hfill#&\vrule#&
\hfill#&\vrule#&
\hfill#&\vrule#&
\hfill#&\vrule\tabskip=0.1pt#&
#\hfill\vrule\vrule
\cr\vonal\vonal\vonal\vonal
&&\hfill$n$&&$97$&&$98$&&$99$&&$100$&&$2020$&
\cr\vonal\vonal
&&\hfill$\maxs(n)$&&$3.22\cdot 10^{110}$&&$9.31\cdot 10^{111}$&&$2.69\cdot 10^{113}$&& $7.77\cdot 10^{114}$ &&$3.81\cdot10^{4398}$&
\cr\vonal
&&\hfill$m(n)$&&$1.08\cdot 10^{87}$&&$1.08\cdot 10^{87}$&&$3.09\cdot 10^{89}$&&$3.09\cdot 10^{89}$&&$5.52\cdot 10^{3893}$&
\cr\vonal
&&\hfill$\csm(n)$&&$1.52\cdot 10^{32}$&&$1.52\cdot 10^{32}$&&$1.45\cdot 10^{34}$&& $1.45\cdot 10^{34}$ &&$3.97\cdot 10^{1700}$&
\cr\vonal\vonal\vonal\vonal
}}
\label{tablerdDbsg} 
\end{align}

The aim of this section is to prove the following theorem; \eqref{pbxPartnmM} and \eqref{eqtxtmMxSdfLm} are still in effect.

\begin{theorem}\label{thmmain} Let $n\geq 5$ be an integer, let 
$k:=\lfloor (n-1)/2 \rfloor$, and let 
\begin{equation}
m=m(n):=\maxs(k)\cdot \maxs(k-1).
\label{eqmSrpRdgB}
\end{equation}
Then $\Part n^m$ or, equivalently, $\Equ n^m$ is four-generated. In other words, the $m$-th direct power of the lattice of all partitions of the set $\set{1,2,\dots, n}$ is generated by a four-element subset.
\end{theorem}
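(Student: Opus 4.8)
The plan is to place a copy of Z\'adori's configuration in each of the $m$ coordinates, to keep three of the four generators constant along the diagonal, and to let only the fourth generator vary; the point is that \eqref{eqpbxZbntGhSwD} and its even analogue \eqref{eqnZhgjdlSrkhllRw} allow a great deal of freedom for this fourth coordinate. Abbreviate $L:=\Equ{Z(n)}\cong\Part n$, where $Z(n)=Z(2k+1)$ if $n$ is odd and $Z(n)=Z(2k+2)$ if $n$ is even, so that $k=\lfloor(n-1)/2\rfloor$ in either case and Lemma~\ref{lemmazadori} or Lemma~\ref{lemmazeven} applies. Call a partition $\delta'\in L$ \emph{admissible} if $\beta\delta'=\equ{a_0}{b_0}$ and $\gamma\delta'=\equ{a_k}{b_{k-1}}$. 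By \eqref{eqpbxZbntGhSwD} (respectively \eqref{eqnZhgjdlSrkhllRw}), substituting any admissible $\delta'$ for $\delta$ in the terms built in the proofs of these lemmas still yields every atom $\equ x y$ of $L$.

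Fix the constant vectors $\valpha=\tuple{\alpha,\dots,\alpha}$, $\vbeta=\tuple{\beta,\dots,\beta}$, $\vgamma=\tuple{\gamma,\dots,\gamma}\in L^m$, pick admissible partitions $\delta^{(1)},\dots,\delta^{(m)}$, and put $\vec\delta:=\tuple{\delta^{(1)},\dots,\delta^{(m)}}$. Since lattice terms act coordinatewise and admissibility is all that the generation uses, applying the atom-terms above to $\tuple{\valpha,\vbeta,\vgamma,\vec\delta}$ returns the same atom in every coordinate; hence each such term evaluates to a constant tuple $\tuple{\equ x y,\dots,\equ x y}$. As $L$ is atomistic, these constant atom-tuples generate the entire diagonal $D=\set{\tuple{x,\dots,x}:x\in L}$. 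Consequently $S:=\sublat{\valpha,\vbeta,\vgamma,\vec\delta}$ contains $D$ as well as the single non-diagonal element $\vec\delta$.

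It remains to separate the coordinates, that is, to show $S=L^m$. Here I would use that $L=\Equ{Z(n)}$ is a simple lattice: a subdirect square of a simple lattice that contains the diagonal of $L^2$ is either that diagonal or the whole of $L^2$, so the projection of $S$ to any two coordinates $s\ne t$ is all of $L^2$ as soon as $\delta^{(s)}\ne\delta^{(t)}$. Since lattices form a congruence-distributive variety, a subdirect power of a simple lattice is already determined by its two-coordinate projections; thus, once all pairwise projections are full, $S=L^m$ follows. Everything therefore reduces to producing $m$ pairwise distinct admissible partitions. To reach the explicit value $m=\maxs(k)\cdot\maxs(k-1)$ of \eqref{eqmSrpRdgB}, I would take a product family $\delta_{\sigma,\tau}$ in which $\sigma$ ranges over the partitions of a distinguished $k$-element set of vertices and $\tau$ over those of a distinguished $(k-1)$-element set, each $\delta_{\sigma,\tau}$ merging only vertices on the same side of the configuration, so that no forbidden $\beta$- or $\gamma$-pair is ever reunited and admissibility is automatic. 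Restricting $\sigma$ and $\tau$ to the block-numbers that maximise the Stirling numbers $S(k,\cdot)$ and $S(k-1,\cdot)$ then gives precisely $\maxs(k)\cdot\maxs(k-1)$ distinct admissible partitions.

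I expect the separation step to be the main obstacle, in two guises. If one invokes the structure theory, one must formulate cleanly that a subdirect power of a simple lattice is recovered from its two-variable projections and verify the simplicity of $\Equ{Z(n)}$; if one prefers a self-contained argument, the obstacle becomes the explicit construction, from $D$ and $\vec\delta$, of selector elements $\vec e_t\in S$ equal to the top of $L$ in coordinate $t$ and to the bottom elsewhere, after which $\tuple{x_1,\dots,x_m}=\sum_t \vec e_t\cdot\tuple{x_t,\dots,x_t}$ reconstructs any element of $L^m$. The remaining work is the bookkeeping for the family $\delta_{\sigma,\tau}$: checking that the two forced pairs $\set{a_0,b_0}$ and $\set{a_k,b_{k-1}}$, once combined with the same-side merges prescribed by $\sigma$ and $\tau$, never transitively create a forbidden $\beta$- or $\gamma$-pair, and that distinct $\tuple{\sigma,\tau}$ yield distinct partitions. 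Finally, since far more than $\maxs(k)\cdot\maxs(k-1)$ admissible partitions exist, this route suggests that the true maximal exponent is larger, in agreement with the paper's remark that it is left undetermined.
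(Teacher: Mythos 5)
Your first half --- constant $\valpha,\vbeta,\vgamma$, a varying fourth coordinate taken from a product family of partitions of an $a$-side set and a $b$-side set, and the observation via \eqref{eqpbxZbntGhSwD} that all constant atom-tuples, hence the whole diagonal, lie in $S$ --- coincides with the paper's construction. Your reduction of ``$S=L^m$'' to the fullness of all binary projections is also legitimate for lattices, although the correct reference is the Baker--Pixley theorem for algebras with a majority term (lattices have $(x\wedge y)\vee(y\wedge z)\vee(z\wedge x)$); congruence distributivity alone does not yield two-coordinate determination. The genuine gap is the step that is supposed to make the binary projections full. The dichotomy you invoke is false: for \emph{any} lattice $L$ with $|L|\geq 2$, simple or not, the order relation $R=\set{\pair x y\in L^2: x\leq y}$ is a sublattice of $L^2$ (by monotonicity of the operations), it contains the diagonal, it is subdirect, and it is neither the diagonal nor $L^2$. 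Consequently your criterion ``as soon as $\delta^{(s)}\neq\delta^{(t)}$'' fails: if $\delta^{(s)}<\delta^{(t)}$, then all four generators of the $(s,t)$-projection of $S$ lie in $R$, hence so does the whole projection, and $S\neq L^m$. Distinctness is not the operative property; this is exactly why the paper insists on the antichain \eqref{eqanTiChaIn} and records the incomparability statement \eqref{pbxTfhhsslqkntLXn}.

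Requiring your $\delta_{\sigma,\tau}$ to be pairwise incomparable (which your family, like the paper's, in fact is, since partitions with a fixed block number form an antichain) still does not close the gap, because the repaired claim ``$L$ simple and $a\parallel b$ imply $\sublat{D\cup\set{\pair a b}}=L^2$, where $D$ is the diagonal of $L^2$'' is not a citable standard fact, and nothing in your proposal proves it; it is precisely the hard core of the theorem. What the paper proves instead is a statement tailored to its family: the coordinate-killing terms \eqref{eqcBmTzXQfFt} and \eqref{eqttzsnJsjPl}, whose selector behaviour \eqref{eqHgtqrmBnSkWrkK} produces exactly the elements you call $\vec e_t$. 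Moreover, their verification (see \eqref{eqpbxZhgRblKnGgvnD} and the ``passageway'' argument) uses more than incomparability of the $\shd_i$: it needs the incomparability to be witnessed \emph{inside the $\alpha$-blocks}, that is, the meets $\alpha\shd_i$ must be pairwise incomparable, which holds precisely because $U$ and $W$ were placed inside the two nonsingleton blocks of $\alpha$. This hypothesis is not cosmetic: for large $k$, the partitions $\delta+\equ{a_1}{b_3}$ and $\delta+\equ{a_1}{b_4}$ are admissible and incomparable, yet both meet $\alpha$ in the bottom element, so the paper's terms could not separate two coordinates carrying them. In short, the selector elements do not come for free from simplicity; constructing them is the main content of the proof, and that construction is missing from your argument.
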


Some values of $m(n)$ are given in Tables~\eqref{tablerdDbsa}--\eqref{tablerdDbsg}. Before proving this theorem, we formulate some remarks and  corollaries and we make some comments.

\begin{corollary}\label{coroLprd}
Let $n$ and $m$ as in Theorem \ref{thmmain}. Then for every integer $t$ with $1\leq t\leq m$, the direct power $\Part n^t$ is four-generated. In particular, $\Part n$ in itself is four-generated.
\end{corollary}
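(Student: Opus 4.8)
The plan is to exploit the fact that, for $t\leq m$, the lattice $\Part n^t$ is a surjective homomorphic image of $\Part n^m$, together with the elementary fact that surjective lattice homomorphisms carry generating sets to generating sets. First I would invoke Theorem~\ref{thmmain} to fix a four-element generating set $\set{a_1,a_2,a_3,a_4}$ of $\Part n^m\cong\Equ n^m$. Next, for a fixed $t$ with $1\leq t\leq m$, consider the projection $\pi\colon \Equ n^m\to\Equ n^t$ onto the first $t$ coordinates; this is a surjective lattice homomorphism. Since $\pi$ is onto and preserves meet and join, the images $\pi(a_1),\dots,\pi(a_4)$ generate $\Equ n^t$, whence $\Part n^t$ is generated by a set of at most four elements.

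The only point requiring care --- rather than real work --- is the clause ``pairwise distinct'' in the definition of a four-generated lattice, since the projections $\pi(a_1),\dots,\pi(a_4)$ need not be distinct. To handle this, I would note that $\Part n^t$ has at least $\Bell n\geq\Bell 5=52$ elements, because $n\geq 5$; in particular it has more than four elements. If the distinct values among $\pi(a_1),\dots,\pi(a_4)$ number $j\leq 4$, then these $j$ elements already generate $\Part n^t$, and enlarging this set by $4-j$ further, arbitrarily chosen, pairwise distinct elements of $\Part n^t$ (which exist since $|\Part n^t|>4$) yields a four-element generating set; adding elements to a generating set cannot destroy the generating property. Hence $\Part n^t$ is four-generated for every $t$ with $1\leq t\leq m$.

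Finally, the ``in particular'' assertion is just the case $t=1$, as $\Part n^1=\Part n$. I do not anticipate a genuine obstacle in this corollary: the whole argument is a soft, general-algebra observation, and the one thing to keep in mind is that ``four-generated'' is taken here to require four \emph{distinct} generators, which is precisely why the padding step above is needed.
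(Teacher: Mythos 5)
Your proof is correct and follows essentially the same route as the paper: project $\Part n^m$ onto the first $t$ coordinates and use that a surjective lattice homomorphism carries a generating set to a generating set of the image. The only difference is that you make explicit the padding argument needed to get four \emph{pairwise distinct} generators, a point the paper glosses over with the phrase ``at most 4-element generating set.''
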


The second half of Corollary~\ref{coroLprd} shows that Theorem~\ref{thmmain} is a stronger statement than the Strietz--Z\'adori result; see \eqref{eqpbxstRszlT} in the Introduction.  This corollary follows quite easily from Theorem~\ref{thmmain} as follows.

\begin{proof}[Proof of Corollary~\ref{coroLprd}]
Since the natural projection $\Part n^m \to \Part n^t$, defined by $\tuple{x_1,\dots, x_m}\mapsto \tuple{x_1,\dots, x_t}$, sends a 4-element generating set into an at most 4-element generating set, Theorem~\ref{thmmain} applies.
\end{proof}
 
\begin{remark} We cannot say that $m=m(n)$ in Theorem~\ref{thmmain} is the largest possible exponent. First, because the proof that we are going to present relies on a particular construction and we do not know whether there exist  better constructions for this purpose. 
Second, because we use Stirling numbers of the second kind to give a lower estimate of the size of a maximum-sized antichain in partition lattices, and we know from 
Canfield~\cite{canfield} that this estimate is not sharp. However, this fact would not lead to a reasonably esthetic improvement of Theorem~\ref{thmmain}.
\end{remark}

\begin{remark}\label{remnVrbG} If $n$ and $t$ are positive integers such that $n\geq 4$ and
\begin{equation}
t >  \Bell n \cdot \Bell{n-1}\cdot \Bell{n-2}\cdot \Bell{n-3} ,
\label{eqthmBsrhzTq}
\end{equation} 
then $\Part n^t$ is not four-generated. Thus, the exponent in Theorem~\ref{thmmain} cannot be arbitrarily large.
\end{remark}

The product occurring in \eqref{eqthmBsrhzTq} is much larger than $m(n)$ in \eqref{eqmSrpRdgB}. Hence, there is a wide interval of integers $t$ such that we do not know whether $\Part n^t$ is four-generated or not.

\begin{proof}[Proof of Remark~\ref{remnVrbG}]
Let $p$ denote the product in  \eqref{eqthmBsrhzTq}.
For the sake of contradiction, suppose that $t>p$ but $\Part n^t$ is generated by some $\set{\alpha,\beta,\gamma,\delta}$. Here $\alpha=\tuple{\alpha_1,\alpha_2,\dots,\alpha_t}$
with all the $\alpha_i\in\Part n$, and similarly for $\beta$, $\gamma$, and $\delta$. By the easy argument proving Corollary~\ref{coroLprd}, we know that $\set{\alpha_i,\beta_i,\gamma_i,\delta_i}$ generates $\Part n$ for all $i\in\set{1,\dots, t}$. 
Since $\Part n$ is not 3-generated by Z\'adori~\cite{zadori}, the quadruple $\tuple{\alpha_i,\beta_i,\gamma_i,\delta_i}$ consists of pairwise distinct components. But there are only $p$ such quadruples, whereby the  the pigeonhole principle yields two distinct subscripts $i$ and $j$  $\in\set{1,\dots, t}$
such that $\tuple{\alpha_i,\beta_i,\gamma_i,\delta_i}=\tuple{\alpha_j,\beta_j,\gamma_j,\delta_j}$. Hence, for every quaternary lattice term $f$, we have that 
 $f(\alpha_i,\beta_i,\gamma_i,\delta_i)=f(\alpha_j,\beta_j,\gamma_j,\delta_j)$. This implies that for every 
$\eta=\tuple{\eta_1,\dots,\eta_t}\in \sublat{\alpha,\beta,\gamma,\delta}$, we have that $\eta_i=\eta_j$. Thus, $\sublat{\alpha,\beta,\gamma,\delta}\neq \Part n^t$, which is a contradiction proving  Remark~\ref{remnVrbG}.
\end{proof}

\begin{remark}\label{remngyznnGynrGcrsT} For a four-generated finite lattice $L$, the direct square $L^2$ of $L$ need not be four-generated. For example, if $L$ is the distributive lattice generated freely by four elements, then there exists no $t\geq 2$ such that $L^t$ is four-generated.
\end{remark}

\begin{proof}
Let $t\geq 2$, and let $L$ be the free distributive lattice on four generators. Observe that $L^t$ is distributive. So if $L^t$ was four-generated, then it would be a homomorphic image of $L$ and  $|L|^t=|L^t|\leq |L|$ would be a contradiction.
\end{proof}

\begin{proof}[Proof of Theorem~\ref{thmmain}] 
Since the notation of the elements of the base set is irrelevant, it suffices to show that $\Equ{Z(n)}^m$ is four-generated. No matter if $n$ is odd or even, we 
use the notation $k$, $a_i$ and $b_j$ as in Figures~\ref{figd2} and \ref{figd3}. We are going to define $\valpha=\tuple{\alpha_1,\dots, \alpha_m}$, $\vbeta=\tuple{\beta_1,\dots, \beta_m}$, $\vgamma=\tuple{\gamma_1,\dots, \gamma_m}$, and $\vshd=\tuple{\shd_1,\dots, \shd_m}$ so that $\set{\valpha,\vbeta,\vgamma,\vshd}$ generates $\Equ{Z(n)}^m$. For every $i\in\set{1,\dots,m}$,  $\alpha_i$, $\beta_i$, and $\gamma_i$ are defined as in Figures~\ref{figd2} and \ref{figd3}, that is, as in the proofs of Lemmas~\ref{lemmazadori} and \ref{lemmazeven}. 
However, the definition of the equivalences $\shd_i$ is going to be more tricky.
Let $\delta_i:=\equ{a_0}{b_0}+\equ{a_k}{b_{k-1}}$, as in Lemmas~\ref{lemmazadori} and \ref{lemmazeven}. Note that 
\begin{equation}
\text{none of $\alpha_i:=\alpha$, $\beta_i:=\beta$, $\gamma_i:=\gamma$, and $\delta_i:=\delta$ depends on $i$.}
\label{eqtxtngkSmfmRstsRz}
\end{equation}
We know from Lemmas~\ref{lemmazadori} and \ref{lemmazeven} that $\set{\alpha_i,\beta_i,\gamma_i,\delta_i}$ generates $\Equ{Z(n)}$. Therefore, for any two distinct elements $u$ and $v$ of $Z(n)$, we can pick a quaternary lattice term $\fterm u v= \fterm u v(\oal,\obe,\oga,\ode)$ with variables $\obmu:=\tuple{\oal,\obe,\oga,\ode}$ such that, in virtue of \eqref{eqczhhndPMtlvRdwdb} and \eqref{eqnmsdsulTVcX},
\begin{equation}\left.
\parbox{8.7cm}{depending on the parity of $n$, $\fterm u v$ is $\eterm u v$ from the proof of Lemma~\ref{lemmazadori} or it is 
 $\fterm u v^\ast$ from that of Lemma~\ref{lemmazeven}, and 
$\fterm u v(\alpha_i,\beta_i,\gamma_i,\delta_i)=\equ u v \in \Equ{Z(n)}$.}\,\,\right\}
\label{eqfTmrzGb}
\end{equation}
By defining $\fterm u u$ to be the meet of its four variables, the validity of\eqref{eqfTmrzGb} extends to the case $u=v$, where $\equ u u$ is understood as the least partition, that is, the partition with all of its blocks being singletons.

Next, let 
\begin{equation}
U:=\set{a_1,a_2\dots, a_{k-1}}\,\text{ and }\,W:=\set{b_0,b_1,\dots,b_{k-1}};
\label{eqwmghUV}
\end{equation}
these sets are indicated by dashed ovals in Figures~\ref{figd2}, \ref{figd3}, and \ref{figd4}. 
By the definition of $\maxs(k-1)$, we can pick an integer
$r'\in\NN$ such that there are exactly $\maxs(k-1)$ equivalences of $U$ with exactly $r'$ blocks. (By a block of an equivalence we mean a block of the corresponding partition.)  Let $\aGa$ denote the set of these  ``$r'$-block equivalences'' of $U$. Clearly, $\aGa$ is an antichain in $\Equ U$ with size $|\aGa|=\maxs(k-1)$. 
Similarly, $\maxs(k)$ is the number of $r''$-block equivalences  for some $r''\in\NN$ and the $r''$-block equivalences of $W$ form an antichain $\aHa\subseteq \Equ W$ such that $|\aHa|=\maxs(k)$. 
Observe that, in the direct product $\Equ U\times\Equ W$,
\begin{equation}
\aGa\times \aHa \text{ is an antichain}.
\label{eqanTiChaIn}
\end{equation}
Since $|\aGa\times \aHa|=|\aGa|\cdot|\aHa|=\maxs(k-1)\cdot \maxs(k)=m$, see \eqref{eqmSrpRdgB},
we can enumerate $\aGa\times \aHa$ in the following repetition-free list of length $m$ as follows:
\begin{equation}
\aGa\times \aHa=\set{\pair{\kappa_1}{\lambda_1}, \pair{\kappa_2}{\lambda_2}, \dots, \pair{\kappa_m}{\lambda_m}  }.
\label{eqchzTnFshRp}
\end{equation}
For each $i\in\set{1,\dots,m}$, we define $\shd_i$ as follows:
\begin{equation}
\shd_i:=  \text{the equivalence generated by }\delta_i \cup \kappa_i \cup \lambda_i;
\label{eqctznBkPrDTrMp}
\end{equation}
this makes sense since each of $\delta_i$, $\kappa_i$ and $\lambda_i$ is a subset of $Z(n)\times Z(n)$.
Clearly, for any $x\neq y\in Z(n)$,   $\pair x y\in \alpha\shd_i$ if and only $\pair x y\in\kappa_i\cup\lambda_i\subseteq U^2\cup W^2$. This fact together with $\kappa_i\cap\lambda_i\subseteq U^2\cap W^2=\emptyset$ and \eqref{eqanTiChaIn}  yield that 
for any $i,j\in\set{1,2,\dots,m}$,
\begin{equation}
\text{if $i\neq j$, then $\alpha \shd_i$ and $\alpha \shd_j$ are incomparable.}
\label{pbxTfhhsslqkntLXn}
\end{equation}

\begin{figure}[htb] 
\centerline
{\includegraphics[scale=1.0]{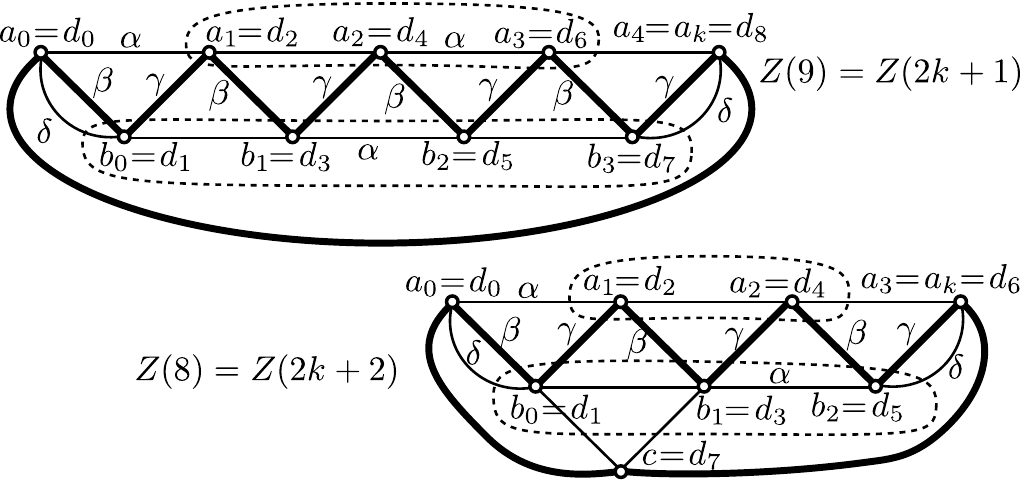}}
\caption{``zigzagged circles''
\label{figd4}}
\end{figure}%

Next, we define 
\begin{equation}
\text{the ``\emph{zigzagged circle}'' }\,\,
\tuple{d_0,d_1,\dots, d_{n-1}}
\label{eqtxtZgZghsvTjs}
\end{equation}
as follows; see also the thick edges and curves in Figure~\ref{figd4}. 
(Note that the earlier meaning of the notation $d_0, d_1,\dots$ is no longer valid.)
For $i\in\set{0,\dots,k-1}$, we let $d_{2i}:=a_i$ and $d_{2i+1}=b_i$. We let $d_{2k}=a_k$ and, if $n=2k+2$ is even, then we let $d_{n-1}=c$. Two consecutive vertices of the zigzagged circle
will always be denoted by $d_p$ and $d_{p+1}$ where $p, p+1\in\set{0,1,\dots,n-1}$ and the addition is understood modulo $n$. The zigzagged circle has one or two \emph{thick curved edges}; they are $\pair{a_0}{a_k}$ for $n=2k+1$ odd and they are  $\pair{a_0}c$ and $\pair{c}{a_k}$ for $n=2k+2$ even; the rest of its edges are \emph{straight thick edges}. So the zigzagged circle consist of the thick (straight and curved) edges, whereby the adjective ``thick'' will often be dropped.

Next, we define some lattice terms associated with the edges of the zigzagged circle. Namely,
for  $j\in\set{1,\dots,m}$ and for 
$p\in \set{0,1,\dots,2k-1}$, we define the quaternary term
\begin{equation}
\begin{aligned}
\gterm j {d_p} {d_{p+1}}(\obmu):=
\fterm {d_p} {d_{p+1}}(\obmu)
&\cdot \prod_{\pair {d_p}x \in \alpha\shd_j} \bigl(  \oal\ode +  \fterm x{d_{p+1}}(\obmu)  \bigr) 
\cr
&\cdot
\prod_{\pair y{d_{p+1}}\in \alpha\shd_j} \bigl(\fterm {d_{p}}y(\obmu) + \oal\ode \bigr).
\end{aligned}
\label{eqcBmTzXQfFt}
\end{equation}
The assumption on $p$ means that \eqref{eqcBmTzXQfFt} defines $\gterm j {d_p} {d_{p+1}}(\obmu)$ for each straight edge of the zigzagged circle. 
We claim that for all $j\in\set{1,\dots,m}$ and $p\in \set{0,1,\dots,2k-1}$, 
\begin{equation}
\gterm j {d_p} {d_{p+1}}(\alpha_j,\beta_j,\gamma_j,\shd_j)=
\equ {d_p} {d_{p+1}}.
\label{eqCshTnGbW}
\end{equation}
In order to show \eqref{eqCshTnGbW}, observe that 
$\beta_i\shd_i=\beta_i\delta_i=\equ{a_0}{b_0}$ and 
$\gamma_i\shd_i=\gamma_i\delta_i=\equ{a_k}{b_{k-1}}$. These equalities, \eqref{eqpbxZbntGhSwD},  \eqref{eqnZhgjdlSrkhllRw}, and \eqref{eqfTmrzGb} yield that for any $u,v\in Z(n)$, $i\in\set{1,\dots,m}$, and $p\in\set{0,1,\dots,2k-1}$, 
\begin{align}
\fterm {u} {v}(\alpha_i,\beta_i,\gamma_i,\shd_i)&=\equ {u} {v}\,\,\text{ and, in particular,}  \label{eqalignZrtbvRsrst}
\\
\fterm {d_p} {d_{p+1}}(\alpha_i,\beta_i,\gamma_i,\shd_i)&=\equ {d_p} {d_{p+1}}.
\label{eqcHtRbkvWp}
\end{align}
Combining \eqref{eqcBmTzXQfFt} and  \eqref{eqcHtRbkvWp}, 
we obtain the ``$\leq$'' part of \eqref{eqCshTnGbW}. In order to turn this inequality to an equality, we have to show that the pair  
$\pair{d_p}{d_{p+1}}$ belongs to $\alpha\shd_j+\fterm x{d_{p+1}} (\alpha_j,\beta_j,\gamma_j,\shd_j ) $ for every $\pair {d_p}x \in \alpha\shd_j$, and it also belongs to 
$\fterm {d_{p}}y(\alpha_j,\beta_j,\gamma_j,\shd_j) + \alpha\shd_j$ for every $\pair y{d_{p+1}}\in \alpha\shd_j$.
But this is trivial since $\pair x{d_{p+1}}\in \fterm x{d_{p+1}} (\alpha_j,\beta_j,\gamma_j,\shd_j )$ in the first case by \eqref{eqalignZrtbvRsrst}, and similarly trivial in the second case. 
We have shown  \eqref{eqCshTnGbW}.

Next, we claim that for any $i,j\in\set{1,\dots,m}$,
\begin{equation}\left.
\parbox{8cm}{if $i\neq j$, then there exists a $p\in
\set{0,1,\dots, 2k-1}$ such that 
$\gterm j {d_p} {d_{p+1}}(\alpha_i,\beta_i,\gamma_i,\shd_i)=\enul:=0_{\Equ{Z(n)}}$.
}\,\,\right\}
\label{eqpbxZhgRblKnGgvnD}
\end{equation}
In order to prove \eqref{eqpbxZhgRblKnGgvnD}, assume that $i\neq j$. For an equivalence $\epsilon\in \Equ{Z(n)}$ and $x\in Z(n)$, the \emph{$\epsilon$-block} $\set{y\in Z(n): \pair x y\in \epsilon}$ of $x$ will be denoted by $x/\epsilon$. 
We know from \eqref{pbxTfhhsslqkntLXn} that $\alpha\shd_j\not\leq\alpha\shd_i$. Hence, there is an element $x\in Z(n)$ such that $x/(\alpha\shd_j)\not\subseteq  x/(\alpha\shd_i)$. Since  $c/(\alpha\shd_j)=\set c= c/(\alpha\shd_i)$ for $n$ even, $x$ is distinct from $c$. Hence, $x$ is one of the endpoints of a straight edge $\pair{d_p}{d_{p+1}}$ of the zigzagged circle. This is how we can select a $p\in\set{0,1,\dots, 2k-1}$, that is, 
a straight edge $\pair{d_p}{d_{p+1}}$ of the zigzagged circle \eqref{eqtxtZgZghsvTjs}  such that 
\begin{equation}
d_p/(\alpha\shd_j)\not\subseteq  d_p/(\alpha\shd_i) \quad \text{ or }\quad 
d_{p+1}/(\alpha\shd_j) \not\subseteq  d_{p+1}/(\alpha\shd_i).
\label{eqchSdjgThRgRTrnL}
\end{equation} 
Now, we are going to show that this $p$ satisfies the requirement of \eqref{eqpbxZhgRblKnGgvnD}. We can assume that the first part of the disjunction given in \eqref{eqchSdjgThRgRTrnL} holds, because the treatment for the second half is very similar. Pick an element 
\begin{equation}
z\in d_p/(\alpha\shd_j)\text{ such that }z\notin  d_p/(\alpha\shd_i). 
\label{eqChClmTnsz}
\end{equation} 
Because of \eqref{eqcHtRbkvWp} and the first meetand in \eqref{eqcBmTzXQfFt}, 
\begin{equation}
\gterm j {d_p} {d_{p+1}}(\alpha_i,\beta_i,\gamma_i,\shd_i)\leq \equ {d_p} {d_{p+1}}.
\label{eqnMsTsnWRStbHws}
\end{equation}
We claim that 
\begin{equation}
\pair{d_p} {d_{p+1}}\notin \gterm j {d_p} {d_{p+1}}(\alpha_i,\beta_i,\gamma_i,\shd_i).
\label{eqhzSzNtlNsrLlgz}
\end{equation}
Suppose the contrary. Then, using \eqref{eqcBmTzXQfFt} and that
$\pair {d_p}z\in\alpha\shd_j$ by \eqref{eqChClmTnsz}, we have that 
\begin{equation}
\pair{d_p} {d_{p+1}} \in \alpha\shd_i + \fterm z{d_{p+1}}(\alpha_i,\beta_i,\gamma_i,\shd_i)
\eeqref{eqalignZrtbvRsrst}
\alpha\shd_i + \equ z{d_{p+1}}. 
\label{eqghzrBsjTvnScsz}
\end{equation}
According to \eqref{eqghzrBsjTvnScsz}, there exists a \emph{shortest} sequence $u_0=d_{p+1}$, $u_1$, \dots, $u_{q-1}$, $u_q=d_p$ such that for every $\ell\in\set{0,1,\dots, q-1}$, 
either $\pair{u_\ell}{u_{\ell+1}}\in \alpha\shd_i$, which is called a \emph{horizontal step}, or  $\pair{u_\ell}{u_{\ell+1}}\in \equ z{d_{p+1}}$, which is a \emph{non-horizontal step}.
There is at least one non-horizontal steps since $d_p$ and $d_{p+1}$ are in distinct $\alpha$-blocks.
A non-horizontal step means that $\set{u_\ell,u_{\ell+1}} =\set{z,d_{p+1}}$, so $\set{z,d_{p+1}}$ is the only ``passageway'' between the two nonsingleton $\alpha$-blocks.  Hence, there exists exactly one non-horizontal step since our sequence is repetition-free. This step is the first step since we have taken a shortest sequence. Hence, 
$u_1=z$ and all the subsequent steps are horizontal steps. 
Hence, $\pair {z}{d_p}=\pair {u_1}{d_p}\in\alpha\shd_i$. 
Thus, $z\in d_p/(\alpha\shd_i)$, contradicting the choice of $z$ in \eqref{eqChClmTnsz}. This contradiction yields \eqref{eqhzSzNtlNsrLlgz}. Finally,  \eqref{eqhzSzNtlNsrLlgz} together with \eqref{eqnMsTsnWRStbHws} imply \eqref{eqpbxZhgRblKnGgvnD}.

Next, for $j\in\set{1,2,...,m}$ and $q\in\set{0,1,\dots, n-1}$, we define the following quaternary term
\begin{equation}\left.
\begin{aligned}
\hterm j {d_q}{d_{q+1}}(\obmu):= 
\cr\fterm {d_q}{d_{q+1}}(\obmu)
&\cdot
\prod_{p=0}^{2k-1}\bigl(\fterm {d_q}{d_p}(\obmu) + \gterm j{d_p}{d_{p+1}}(\obmu) + \fterm {d_{p+1}}{d_{q+1}}(\obmu)\bigr)\cr
&\cdot
\prod_{p=0}^{2k-1}\bigl(\fterm {d_q}{d_{p+1}}(\obmu) + \gterm j{d_p}{d_{p+1}}(\obmu) + \fterm {d_{p}}{d_{q+1}}(\obmu)\bigr),
\end{aligned}
\,\,\,\right\}
\label{eqttzsnJsjPl}
\end{equation}
where $q+1$ in subscript position is understood modulo $n$.
We claim that, for $q\in\set{0,1,\dots, n-1}$ and $i,j\in\set{1,\dots, m}$, 
\begin{equation}
\hterm j  {d_q}{d_{q+1}}(\alpha_i,\beta_i,\gamma_i,\shd_i)=
\begin{cases}
\equ  {d_q}{d_{q+1}},&\text{if }\,\, i=j,\cr
\enul=0_{\Equ{Z(n)}},&\text{if }\,\, i\neq j.
\end{cases}
\label{eqHgtqrmBnSkWrkK}
\end{equation}
In virtue of \eqref{eqCshTnGbW},  \eqref{eqalignZrtbvRsrst}, and \eqref{eqcHtRbkvWp}, the validity of \eqref{eqHgtqrmBnSkWrkK} is clear when $i=j$. So, to prove  \eqref{eqHgtqrmBnSkWrkK}, we can assume that $i\neq j$. 
Since 
$\hterm j  {d_q}{d_{q+1}}(\alpha_i,\beta_i,\gamma_i,\shd_i)\leq \equ  {d_q}{d_{q+1}}$ by \eqref{eqalignZrtbvRsrst} and \eqref{eqcHtRbkvWp}, it suffices to show that 
$\pair{d_q}{d_{q+1}} \notin \hterm j  {d_q}{d_{q+1}}(\alpha_i,\beta_i,\gamma_i,\shd_i)$. Suppose the contrary. Then we obtain from  \eqref{eqalignZrtbvRsrst} and  \eqref{eqttzsnJsjPl} that for 
all $p\in\set{0,\dots,2k-1}$,
\begin{align}
\pair{d_q}{d_{q+1}} &\in \equ{d_q}{d_p} + \gterm j{d_p}{d_{p+1}}(\alpha_i,\beta_i,\gamma_i,\shd_i)
+ 
\equ{d_{p+1}}{d_{q+1}}\text{ and}
\label{eqlgnbmTzskWrNztp}\\
\pair{d_q}{d_{q+1}} &\in \equ{d_q}{d_{p+1}} + \gterm j{d_p}{d_{p+1}}(\alpha_i,\beta_i,\gamma_i,\shd_i) + \equ {d_{p}}{d_{q+1}}.
\label{fzhNtbmTzrTxT}
\end{align}
Now we choose $p$ according to \eqref{eqpbxZhgRblKnGgvnD}; then $\gterm j{d_p}{d_{p+1}}(\alpha_i,\beta_i,\gamma_i,\shd_i)$ can be omitted from  \eqref{eqlgnbmTzskWrNztp} and \eqref{fzhNtbmTzrTxT}. Therefore, if $p=q$, then \eqref{eqlgnbmTzskWrNztp}  asserts that $\pair{d_q}{d_{q+1}}\in\enul$, a contradiction. Note that, due to $n\geq 5$, $p=q$ is equivalent to  $|\set{d_p,d_{p+1},d_q,d_{q+1}}|=2$, whence $|\set{d_p,d_{p+1},d_q,d_{q+1}}|=2$ has just been excluded. 
If $|\set{d_p,d_{p+1},d_q,d_{q+1}}|=4$, then each of   \eqref{eqlgnbmTzskWrNztp} and \eqref{fzhNtbmTzrTxT} gives a contradiction again. If $|\set{d_p,d_{p+1},d_q,d_{q+1}}|=3$, then exactly one of  \eqref{eqlgnbmTzskWrNztp} and \eqref{fzhNtbmTzrTxT} gives a contradiction. Hence, no matter how $p$ and $q$ are related, we obtain a contradiction. This proves the $i\neq j$ part of \eqref{eqHgtqrmBnSkWrkK}.  Thus,  \eqref{eqHgtqrmBnSkWrkK} has been proved.

Finally, let $K:=\sublat{\valpha,\vbeta,\vgamma,\vshd}$; it is a sublattice of $\Equ{Z(n)}^m$ and we are going to show that 
$K=\Equ{Z(n)}^m$. Let $j\in\set{1,\dots,m}$. 
It follows from \eqref{eqHgtqrmBnSkWrkK} that
\begin{equation}
\tuple{
\enul,\dots,\enul,
\underbrace{\equ {d_q} {d_{q+1}}}
  _{ j\text{-th entry}  }
,\enul,\dots,\enul  }
\in K,\text{ for all }q\in\set{0,\dots,n-1}.
\label{eqmsTlgYkZsWsG}
\end{equation}
Since the sublattice 
\begin{equation*}
S_j:=\set{\enul}\times\dots\times \set{\enul}\times\Equ{Z(n)}\times \set{\enul}\times\dots\times \set{\enul}
\end{equation*}
with the non-singleton factor at the $j$-th place
is isomorphic to $Z(n)$, it follows from \eqref{eqmsTlgYkZsWsG} and Lemma~\ref{lemmaHamilt} that $S_j\subseteq K$, for all $j\in\set{1,\dots,m}$. Therefore, since every element of $\Equ{Z(n)}^m$ is of the form $s^{(1)}+ s^{(2)} + \dots + s^{(m)}$ with $s^{(1)}\in S_1$, \dots, $s^{(m)}\in S_m$, we obtain that $\Equ{Z(n)}^m \subseteq K$. Consequently, 
$\Equ{Z(n)}^m =K = \sublat{\alpha,\beta,\gamma,\shd}$ is a four-generated lattice, as required.
The proof of Theorem~\ref{thmmain} is complete.
\end{proof}

\section{$(1+1+2)$-generation}\label{sectootwo}
By a $(1+1+2)$-generating set or, in other words, \emph{a generating subset of order type $1+1+2$}
we mean a four element generating set such that exactly two of the four elements are comparable. 
Lattices having such a generating set are called \emph{$(1+1+2)$-generated.}
In his paper, Z\'adori~\cite{zadori} proved that for every integer $n\geq 7$, the partition lattice $\Part n$ is $(1+1+2)$-generated. In this way, he improved the result proved by Strietz~\cite{strietz2} from $\set{n: n\geq 10}$ to $\set{n: n\geq 7}$. In this section, we generalize this result to direct powers by the following theorem;
\eqref{pbxPartnmM} and \eqref{eqtxtmMxSdfLm} are still in effect.

\begin{theorem}\label{thmoot} Let $n\geq 7$ be an integer, let 
$k:=\lfloor (n-1)/2 \rfloor$, and let 
\begin{equation}
\csm=\csm(n):=
\max\left(\,\, \lfloor (k-1)/2 \rfloor,\,\,   \maxs(\lfloor (k-1)/2 \rfloor)^2 \,\,   \right).
\label{eqmSrmd nmrpRd}
\end{equation}
Then $\Part n^\csm$ or, equivalently, $\Equ n^\csm$ is $(1+1+2)$-generated. In other words, the $\csm$-th direct power of the lattice of all partitions of the set $\set{1,2,\dots, n}$ is has a generating subset of order type $1+1+2$.
\end{theorem}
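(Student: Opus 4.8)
The plan is to imitate the proof of Theorem~\ref{thmmain} as closely as possible, the one new demand being that the four-element generating set of $\Equ n^\csm$ we produce have order type $1+1+2$ instead of being an antichain. Because the order of a direct power is coordinatewise, two of the four direct-power generators are comparable exactly when the corresponding comparability holds in every one of the $\csm$ coordinates. Hence it is enough to fix a $(1+1+2)$-generating quadruple of $\Equ{Z(n)}$ -- such a quadruple exists for $n\geq 7$ by the result of Z\'adori recalled just before Theorem~\ref{thmoot} -- and then to lift it to $\Equ{Z(n)}^\csm$ by the ``three constant, one varying'' scheme used for Theorem~\ref{thmmain}, arranging that the single comparability survives the lifting while the varying coordinate still separates the $\csm$ factors. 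As in the proof of Theorem~\ref{thmmain}, it suffices to treat $\Equ{Z(n)}$ in place of $\Part n$.

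Concretely, I would choose the $(1+1+2)$-generating quadruple of $\Equ{Z(n)}$ so that its unique comparable pair consists of two generators that are held constant across all coordinates, while the fourth generator -- the antichain carrier playing the role of $\vshd$ in Theorem~\ref{thmmain} -- is one of the two incomparable generators, i.e.\ a ``$1$'' of the order type. With this arrangement the comparable pair is preserved automatically in every coordinate, both of its members being constant, so on the order side the only thing left to verify is that the varying generator remains incomparable to each of the three constants in every coordinate; this is a per-coordinate condition that I would build directly into the definition of the varying generator.

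The data that separates the coordinates is again a product of two antichains, exactly as in \eqref{eqanTiChaIn}--\eqref{eqchzTnFshRp}. The exponent shrinks from $\maxs(k)\cdot\maxs(k-1)$ of \eqref{eqmSrpRdgB} to $\maxs(\lfloor (k-1)/2 \rfloor)^2$ because forcing the comparable pair makes its two (constant) members carry extra structure, and keeping the varying generator incomparable to them costs roughly half of the usable $a$-vertices on each side: I would split the support into two disjoint sets each of size $\lfloor (k-1)/2 \rfloor$, put a maximum-sized antichain, of cardinality $\maxs(\lfloor (k-1)/2 \rfloor)$, on each of them, and let the $j$-th varying generator encode the $j$-th member of the product of these two antichains. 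This provides $\maxs(\lfloor (k-1)/2 \rfloor)^2$ distinguishable coordinates, and the outer $\max$ with $\lfloor (k-1)/2 \rfloor$ in the definition of $\csm$ is merely a lower bound that keeps the construction meaningful for small $n$.

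For the generation step I would reuse the term machinery of Theorem~\ref{thmmain} essentially verbatim. The cross-coordinate incomparability of the $\alpha$-joins of the varying generator (the analogue of \eqref{pbxTfhhsslqkntLXn}) feeds the terms $\gterm j{d_p}{d_{p+1}}$ and then $\hterm j{d_q}{d_{q+1}}$, which, for each $q$ and each $j$, yield the vector equal to $\equ{d_q}{d_{q+1}}$ in the $j$-th coordinate and to $0$ in the others (the analogue of \eqref{eqHgtqrmBnSkWrkK}); atomisticity of $\Equ{Z(n)}$ together with Lemma~\ref{lemmaHamilt} then delivers the whole power $\Equ{Z(n)}^\csm$, precisely as at the close of the proof of Theorem~\ref{thmmain}. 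The main obstacle is the three-way tension on the varying generator: it must (i) stay incomparable to all three constants in every coordinate, so as not to spoil the order type; (ii) have pairwise incomparable $\alpha$-joins across coordinates, so that the extraction terms separate the factors; and (iii) together with the three constants generate $\Equ{Z(n)}$ in each coordinate. Reconciling (i)--(iii) is exactly what dictates the reduced, split antichain support and hence the smaller exponent $\csm(n)$; checking that the split construction still satisfies (ii) and (iii) while respecting (i) is the technical heart of the proof.
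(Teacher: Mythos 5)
Your overall plan---lift a Z\'adori-type $(1+1+2)$ configuration to the power, keep three coordinates constant, vary one generator by injecting antichain data supported on two disjoint sets of size $\lfloor (k-1)/2\rfloor$---is the right family of ideas and even reproduces the paper's numerology. But your structural stipulation is the opposite of what the machinery you intend to ``reuse verbatim'' can support, and this is a genuine gap, not a presentational difference. The only flexible slot in the term machinery of Lemma~\ref{lemmazadori} and Theorem~\ref{thmmain} is the $\delta$-slot: by \eqref{eqpbxTwCzWn} and \eqref{eqpbxZbntGhSwD}, $\ode$ enters the terms only twice, whereas $\obe$ and $\oga$ pervade the whole recursion \eqref{eqZhgRsMks}; so the ``antichain carrier'' must occupy the $\delta$-slot, and perturbing $\beta$ or $\gamma$ coordinatewise would invalidate \eqref{eqmlZrbQshPrk} and everything downstream. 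Now, in the $(1+1+2)$ configuration the carrier is $\delta=\equ{a_0}{a_k}+\equ{b_0}{b_{k-1}}$, whose blocks lie inside $\alpha$-blocks, and the antichain data $\kappa_j\cup\lambda_j$ also lies inside the two rows; hence every perturbation $\shd_j$ satisfies $\shd_j<\alpha$. In other words, the carrier is \emph{forced} to be a member of the comparable pair: the paper's generating set is $\set{\valpha,\vbeta,\vgamma,\vshd}$ with $\vshd<\valpha$ the ``2'' and the \emph{constant} $\vbeta,\vgamma$ the two ``1''s. Your arrangement (comparable pair constant, carrier incomparable to all three constants) would require a brand-new generating configuration of $\Equ{Z(n)}$ together with new term machinery whose flexible slot is an incomparable element; you supply neither, and you explicitly defer exactly this point as ``the technical heart of the proof.'' Note also that nothing needs the comparable pair to be constant: comparability in the power is coordinatewise, and $\shd_j\leq\alpha$ for all $j$ preserves it.

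You also omit the actual technical work that makes even the correct arrangement non-verbatim. With the new $\delta$, the meets used by Z\'adori's terms degenerate: $\beta\shd_j=0\neq\equ{a_0}{b_0}$, so \eqref{eqpbxZbntGhSwD} does not apply as stated. The paper's proof replaces the two occurrences $\obe\ode$ and $\oga\ode$ by $\obe(\oga+\ode)$ and $\oga(\obe+\ode)$, which is legitimate only after proving $\equ{a_0}{b_0}=\beta(\gamma+\shd_j)$ and $\equ{a_k}{b_{k-1}}=\gamma(\beta+\shd_j)$, i.e.\ \eqref{eqwzmbHhGRdffkT}; and establishing these identities is precisely what forces $U$ and $W$ to consist of every \emph{other} (odd-indexed) vertex, as in \eqref{eqhrTdfzgu} and \eqref{eqhrTdfzgv}---the spacing stops $\beta+\bdelta$ and $\gamma+\bdelta$ from percolating along the rows. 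So the halving to $\lfloor (k-1)/2\rfloor$ comes from this percolation constraint, not from an ``incomparability cost'' of the comparable pair. Finally, the outer $\max$ in \eqref{eqmSrmd nmrpRd} is not just a floor for small $n$: it is needed exactly when $r=\lfloor(k-1)/2\rfloor=2$, where $\Equ U$ and $\Equ W$ are two-element chains, the product of maximum antichains has size $\maxs(2)^2=1$, yet $\Equ U\times\Equ W$ still contains a $2$-element antichain; the paper handles this case separately, and your proposal would silently fail there.
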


Note that $\csm$ above is at least $1$, and $\csm \geq 2$ if and only if $n\geq 11$. 
\begin{figure}[htb] 
\centerline
{\includegraphics[scale=1.0]{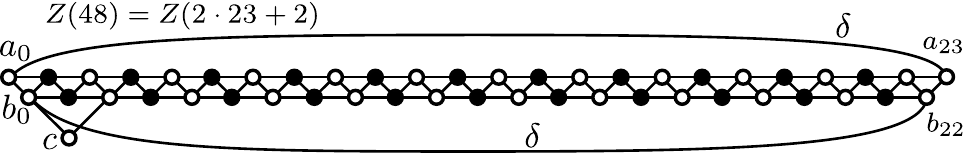}}
\caption{$Z(2k+2)$ for $k=23$
\label{figd5}}
\end{figure}%

\begin{proof} 
With our earlier conventions, we define $\alpha$, $\beta$, and $\gamma$ as in Sections~\ref{sectztrms} and \ref{sectprod}, see also \eqref{eqtxtngkSmfmRstsRz},  but we let $\delta:=\equ{a_0}{a_k}+\equ{b_0}{b_{k-1}}\in\Equ{Z(n)}$. For $n=47$, this is illustrated by Figure~\ref{figd5} if we omit vertex $c$. For $n=48$, Figure~\ref{figd5} is a faithful illustration without omitting anything but taking \eqref{eqtxtsdhClrsznzlD} into account. Instead of working with
$U$ and $W$ from \eqref{eqwmghUV}, we define these two sets as follows.
\begin{align}
U&:=\set{a_i: 1\leq i\leq k-2\text{ and }i\text{ is odd}}\text{ and}\label{eqhrTdfzgu}
\\ 
W&:=\set{b_i: 1\leq i\leq k-2\text{ and }i\text{ is odd}}.
\label{eqhrTdfzgv}
\end{align}
In Figure~\ref{figd5}, $U\cup W$ is the set of black-filled elements. Let $r:=\lfloor (k-1)/2 \rfloor$; note that $r=|U|=|W|$. Since $n\geq 7$, we have that $k\geq 3$ and $r\geq 1$.
Let $\bdelta$ be the equivalence on $Z(n)$ generated by
$\delta\cup U^2\cup W^2$. In other words, $\bdelta$ is the equivalence with blocks  $\set{a_0,a_k,b_0,b_{k-1}}$, $U$, and $W$such that the rest of its blocks are singletons. 
Let 
\[t:=2\cdot\lfloor(k+1)/2\rfloor -3\,\,\text{ and }\,\,T:=\set{i: 1\leq i\leq t \text{ and }i\text{ is odd} }.
\]
Based on Figure~\ref{figd5}, we can think of $T$ as the set of subscripts of the black-filled elements.
The blocks of  $\gamma+\bdelta$ are the following:  
\begin{align*}
&\set{a_0,a_k,b_0,b_{k-1}}\cup \set{a_i: i\in T}\cup\set{b_{i-1}:i\in T},\cr
&\set{a_{i+1}:i\in T}\cup\set{b_{i}:i\in T},\text{ and, if }k\text{ is even, }\set{a_{k-1},b_{k-2}},
\end{align*}
and, for $n$ even, $\set{c}$.
Hence, we obtain that
\begin{equation}
\equ{a_0}{b_0} =\beta(\gamma+\bdelta).
\label{eqrzhGrnszGsFpL}
\end{equation}
Similarly, the blocks of  $\beta+\bdelta$ are $\set{c}$, if $n$ is even, and the following:
\begin{align*}
&\set{a_{i}:i\in T}\cup\set{b_{i}:i\in T},\,\,\,\set{a_0,a_k,b_0,b_{k-1}, a_{k-1}}.\cr
&\text{and, for }i\in\set{2,3,\dots,k-2}\setminus T,\,\,\,\,\set{a_i,b_i}.
\end{align*}
Hence, it follows that 
\begin{equation}
\equ{a_k}{b_{k-1}} =\gamma(\beta+\bdelta).
\label{eqnhsptdkjrTSz}
\end{equation}
In the proof of Theorem~\ref{thmmain}, based on  \eqref{eqwmghUV},  $\aGa$, $\aHa$, \eqref{eqanTiChaIn}, and \eqref{eqchzTnFshRp}, we defined the equivalences $\shd_1$,\dots,$\shd_m$ in \eqref{eqctznBkPrDTrMp}. Now we define  $\shd_1$,\dots,$\shd_\csm$ exactly in the same way but we use \eqref{eqhrTdfzgu} and \eqref{eqhrTdfzgv} instead of  \eqref{eqwmghUV}, and we take into account that  $U$ and $W$ are now smaller and  we obtain $\csm$ rather than $m$ from them. Observe that
\begin{equation}
\equ{a_0}{b_0} =\beta(\gamma+\delta)\quad\text{ and }\quad
\equ{a_k}{b_{k-1}} = \gamma(\beta+\delta).
\label{eqezrhBsmjRtMcLc}
\end{equation}
Since  $\delta\leq \shd_j\leq \bdelta$ for  $j\in\set{1,\dots,\csm}$, it follows from \eqref{eqrzhGrnszGsFpL}, \eqref{eqnhsptdkjrTSz}, and \eqref{eqezrhBsmjRtMcLc} that
\begin{equation}
\equ{a_0}{b_0} =\beta(\gamma+\shd_j)\quad\text{ and }\quad
\equ{a_k}{b_{k-1}} = \gamma(\beta+\shd_j)
\label{eqwzmbHhGRdffkT}
\end{equation}
for all $j\in\set{1,\dots,\csm}$. 
Armed \eqref{eqwzmbHhGRdffkT}
and all the previous preparations, the rest of the proof is the same as in case of Theorem~\ref{thmmain} unless $r=2$;  these details are not repeated here. 
Observe that the only role of $m$ in the proof of Theorem~\ref{thmmain} is that we had to find an $m$-element antichain in 
$\Equ U\times \Equ W$. Similarly, if $r\neq 2$, then all what we have to do with $\csm$ is to find an $\csm$-element antichain in $\Equ U\times \Equ W$. If $r\neq 2$, then we obtain such an antichain as the Cartesian product of an antichain of $\Equ U$ and that of $\Equ W$. If $r=2$, then this method does not work since $\Equ U$ and $\Equ W$ are (two-element) chains but $\csm=2$. However, $\Equ U\times \Equ W$ has an  $\csm=2$-element antichain even in this case. This completes the proof of Theorem~\ref{thmoot}.
\end{proof}

Obviously, Theorem~\ref{thmoot} implies  the following  counterpart of Corollary~\ref{coroLprd}.

\begin{corollary}\label{corodjTd}
Let $n$ and $\csm$ be as in Theorem \ref{thmoot}. Then for every integer $t$ with $1\leq t\leq \csm$, the direct power $\Part n^t$ is $(1+1+2)$-generated.
\end{corollary}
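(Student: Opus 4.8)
The plan is to imitate the proof of Corollary~\ref{coroLprd}, but with the extra care that the \emph{order type} of the generating set must survive the projection. Concretely, I would keep $n$ fixed and apply the coordinate projection $\pi_t\colon \Equ{Z(n)}^\csm \to \Equ{Z(n)}^t$, $\tuple{x_1,\dots,x_\csm}\mapsto\tuple{x_1,\dots,x_t}$, to the particular four generators $\valpha,\vbeta,\vgamma,\vshd$ of $\Equ{Z(n)}^\csm$ constructed in the proof of Theorem~\ref{thmoot}. As in Corollary~\ref{coroLprd}, $\pi_t$ is a surjective lattice homomorphism, so the four images $\pi_t(\valpha),\pi_t(\vbeta),\pi_t(\vgamma),\pi_t(\vshd)$ generate $\Equ{Z(n)}^t\cong\Part n^t$. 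The only thing left to check is that these four images again form a four-element set of order type $1+1+2$.

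First I would locate the unique comparable pair in the original generating set. By \eqref{eqtxtngkSmfmRstsRz} the coordinates $\alpha$, $\beta$, $\gamma$ do not depend on $i$, while each $\shd_j$ is generated by $\delta\cup\kappa_j\cup\lambda_j$ as in \eqref{eqctznBkPrDTrMp}, now with $\delta=\equ{a_0}{a_k}+\equ{b_0}{b_{k-1}}$, $\kappa_j\in\Equ U$, and $\lambda_j\in\Equ W$. Since all the generating pairs in $\delta\cup\kappa_j\cup\lambda_j$ join elements lying in a common block of $\alpha$ (the $a$'s in one block, the $b$'s in the other), we get $\shd_j\leq\alpha$; the inclusion is strict because, e.g., $a_2$ is a singleton of $\shd_j$ but not of $\alpha$ (here $k\geq 3$ since $n\geq 7$). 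Thus $\vshd<\valpha$ holds coordinatewise, and this is the comparability providing the ``$+2$''.

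Next I would show that every other pair among $\alpha,\beta,\gamma,\shd_j$ is incomparable \emph{in each single coordinate}. In $\Equ{Z(n)}$ the equivalences $\alpha,\beta,\gamma$ are pairwise incomparable; moreover, for every $j$ one checks that $\beta\not\leq\shd_j$ and $\shd_j\not\leq\beta$ (the block $\set{a_0,a_k}$ of $\shd_j$ lies in no $\beta$-block, while the block $\set{a_0,b_0}$ of $\beta$ lies in no $\shd_j$-block), and symmetrically $\gamma\not\leq\shd_j$ and $\shd_j\not\leq\gamma$. The crucial point is that all of these incomparabilities are witnessed already in the first coordinate; a coordinate projection preserves $\leq$ and can therefore only make elements \emph{more} comparable, but each of our incomparable pairs fails $\leq$ in the first coordinate in \emph{both} directions, so the images stay incomparable for every $t\geq 1$. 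Together with $\pi_t(\vshd)<\pi_t(\valpha)$, this shows the four images are distinct and have exactly one comparable pair, i.e.\ order type $1+1+2$.

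The main obstacle, and the only place where this is more than the trivial projection argument of Corollary~\ref{coroLprd}, is precisely this preservation of order type: dropping coordinates never destroys a comparability but can create one, so a \emph{generic} $(1+1+2)$-generating set of $\Equ{Z(n)}^\csm$ need not project to a $(1+1+2)$-generating set. This is overcome by using the specific generators of Theorem~\ref{thmoot}, whose non-comparabilities are coordinatewise. As a backup (and to sidestep any worry in the exceptional $r=2$ case of that proof), one can instead re-run the construction of Theorem~\ref{thmoot} with $t$ in place of $\csm$: since $t\leq\csm$, a $t$-element antichain in $\Equ U\times\Equ W$ is obtained simply as a $t$-element subset of the $\csm$-element antichain already produced there, and the rest of the argument goes through verbatim.
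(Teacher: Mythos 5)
Your proposal is correct and takes essentially the same route as the paper, which offers no written proof beyond ``Obviously, Theorem~\ref{thmoot} implies the following counterpart of Corollary~\ref{coroLprd}'' --- that is, the coordinate-projection argument. The content you add, namely that the order type $1+1+2$ survives projection because $\shd_j<\alpha$ holds in every coordinate of the specific generators of Theorem~\ref{thmoot} while all other pairs are incomparable already in each single coordinate, is exactly the detail the paper suppresses as obvious, and you verify it correctly (your backup argument of re-running the construction with a $t$-element subantichain is also sound).
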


\section{Authentication and secret key cryptography with  lattices}\label{sectauth}

While lattice theory is rich with involved constructs and proofs, it seems not to have many, if any, applications in  information theory. The purpose of this section is to suggest a protocol primarily for \emph{authentication}; it is also good for \emph{secret key cryptography}, and it could be appropriate for a \emph{commitment protocol}.

Assume that during the authentication protocol that we are going to outline, \aph{}\footnote{\emph{\aph} is the Hungarian version of Andrew;
as a famous lattice theorist with this first name, I mention my  scientific advisor, \aph{} P.\ Huhn (1947--1985).} intends to prove his identity to his Bank and conversely;  online, of course. In order to do so, \aph{} and the Bank should find  a lattice $L$ with the following properties: 
\begin{itemize}
\item $|L|$ is large, 
\item $L$ has a complicated structure, 
\item the length of $L$ is small (that is, all maximal chains of  $L$ are small),
\item every non-zero element of $L$ has lots of lover covers and dually,
\item $L$ can be given by and constructed easily from little data, 
\item and $L$ is generated by few elements. 
\end{itemize}
The first four properties are to make the Adversary's task difficult (and practically impossible) while the rest of these properties ensure that \aph{} and the Bank can handle $L$. It is not necessary that $L$ has anything to do with partitions, but partitions lattices and their direct powers seem to be good choices. Partition lattices are quite complicated since every finite lattice can be embedded into a finite partition lattice by Pudl\'ak and T\r uma~\cite{pudlaktuma}. Also, they are large lattices described by very little data. For example, we can take
\begin{align}
L&=\Part{273},\text{ its size is }|\Part{273}|\approx 3.35\cdot 10^{404},\text{ or}\label{eqZhgRtBfktszg} 
\\
L&=\Part{12}^{61},\text{ its size is }|\Part{12}^{61}|\approx 1.27\cdot 10^{404}.
\label{eqdmzmgrYjHBbsgnwsztRtnk}
\end{align}
Although these two lattices seem to be similar in several aspects, each of them has some advantage over the other.
As opposed to  $\Part{273}$,
\begin{equation}
\parbox{7.7cm}{joins can easily be computed componentwise in  $\Part{12}^{61}$ if parallel computation is allowed.}
\end{equation}  
On the other hand, using that $\Part{273}$  is a semimodular lattice and so any two of its maximal chains have the same length, it is easy to see that the longest chain in $\Part{273}$
is only of length 272 (that is, this chain consists of 273 element). Using semimodularity again, it follows easily  that the longest chain in $\Part{12}^{61}$ is of length $61\cdot 11=671$, so  $\Part{273}$ seems to be more advantageous in this aspect.  Based on  data obtained by computer, to be presented in tables \eqref{tableD8gens} and \eqref{tablenFpgFns}, we guess that $\Part{273}$ has more $p$-element generating sets of an \emph{unknown pattern} than $\Part{12}^{61}$. If so, then this can also be an advantage of  $\Part{273}$ 
since a greater  variety of $p$-element generating sets of unknown patterns makes the Adversary's task even more hopeless. It is probably too early to weigh  all the pros and cons of \eqref{eqZhgRtBfktszg}, \eqref{eqdmzmgrYjHBbsgnwsztRtnk} and, say, $\Part{113}^{3}$ with size $|\Part{113}|^{3}\approx 1.51\cdot 10^{405}$.

\aph{} and the Bank choose two small integer parameters $p,q\geq 4$, the suggested value is $p=q=8$ or larger; these numbers can be public. 
Also, \aph{} and the Bank agree upon a $p$-tuple
\begin{equation}
\ves=\tuple{s_1,s_2,\dots,s_{p}} \in L^{p}.
\label{eqphndThbnhnSpLSr}
\end{equation} 
This $\vec s$ is the common authentication code for \aph{} and the Bank; only they know it and they keep it in secret. So far, the role of $\ves$ is that of the PIN (personal identification number) of a bank card.

Every time \aph{} intends to send an authenticated message to the Bank,  the Bank selects a vector $\vew=\tuple{w_1,w_2,\dots, w_q}$ of long and complicated $p$-ary lattice terms
randomly. (We are going to discuss after \eqref{eqmnTltjrm} how to select $\vew$.)
Then the Bank sends $\vew$ to \aph{}. 
(If \aph{} thinks that  $\vew$ is not complicated enough, then he is allowed to ask for a more complicated $\vew$ repeatedly until he is satisfied with $\vew$.) Then, to prove his identity, \aph{} sends 
\begin{equation}
\vew(\ves):=\tuple{w_1(s_1,\dots,s_p),\dots, w_q(s_1,\dots, s_p)  }
\label{eqszhnKhKypNxGqr}
\end{equation}
to the Bank. (Preferably, in the same message that instructs the Bank to do something like transferring money, etc.) The Bank also computes $\vew(\ves)$ and compares it with what \aph{} has sent; if they are equal then the Bank can be sure that he communicates with \aph{} rather than with an adversary. Note that it is easy and fast to compute $\vew(\ves)$ from $\vew$ and $\ves$.  
Note also that, changing their roles, \aph{} can also verify (by another $q$-tuple $\vew'$ of terms) that he communicates with the Bank rather than with the Adversary.

The point of the protocol is that while $\ves$ can be used many times, 
a new $\vew$ is chosen at each occasion. So even if the Adversary intercepts the communication, he cannot use the old values of $\vew(\ves)$. So the Adversary's only chance to interfere is to extract
the secret $\ves$ from $\ver:=\vew(\ves)$. However, extracting $\ves$ from $\ver:=\vew(\ves)$ and $\vew$ seems to be hard. (This problem is in NP and hopefully it is not in $P$.)  The Adversary cannot test all possible $p$-tuples $\ves'\in L^p$ since there are astronomically many such tuples. The usual iteration technique to find a root of a function $\mathbb R^p\to \mathbb R$ is not applicable here since, in general, 
\begin{equation}
\text{it is unlikely that two elements of $L$ are comparable,}
\label{eqtxtnmctKzSszbwhnT} 
\end{equation}
simply because the length of $L$ is small but $|L|$ is large.  It is also unlikely that two members of $L^q$ are comparable. 
If the Adversary begins parsing, say, $r_1:=w_1(\ves)$, then even the first step splits into several directions since $r_1\in L$ has many lower and upper covers and so there are many possibilities to represent it as the join of two elements (in case the outmost operation sign in $w_1$ is $\vee$) or as the meet of two elements (in case the outmost operation sign is $\wedge$).
 Each of these several possibilities split into several cases at the next step, and this happens many times depending on the length of $w_1$. But $w_1$ is a long term, whence exponentially many sub-directions should be handled, which is not feasible.

Some caution  is necessary when choosing the common secret authentication code $\ves$. This $\ves$ should be chosen so that $\sublat{s_1,s_2,\dots, s_p}=L$ or at least $\sublat{s_1,s_2,\dots, s_p}$ should be very large. One possibility to ensure that $\set{s_1,s_2,\dots, s_p}$ generates $L$ is to extend a four-element generating set from Sections~\ref{sectztrms}--\ref{sectootwo} to a $p$-element subset of $L$. 
If $L=\Part{273}$, then one can pick a permutation $\tau$ of the set $\set{1,2,\dots,273}$; this $\tau$ induces an automorphism $\otau$ of $\Part{273}$ in the natural way, and 
$\set{\otau(\alpha),\otau(\beta),\otau(\gamma),\otau(\delta)}$
 with $\alpha,\dots, \delta$ from  Section~\ref{sectztrms} is 
a four-element generating set of $\Part{273}$.
If $L=\Part{12}^{61}$, then in addition to the permutations of $\set{1,2,\dots,12}$, allowing different permutations in the direct factors, 
there are many ways to select a 61-element antichain as a subset of the 175-element maximum-sized antichain that occurs in \eqref{eqanTiChaIn}. (Note that we obtained this number, 175, when computing the last column of \eqref{tablerdDbsa}.) In both cases, \aph{} and the Bank can easily pick one of the astronomically many four-element generating sets described in the present paper. A four-element generating set can be extended  to a $p$-element one in many ways. It would be even better to pick a $p$-element generating set of an unknown pattern, but it is not clear at this moment how this would be possible.

\aph{} and the Bank should also be careful when selecting a $q$-tuple $\vew=\tuple{w_1,\dots,w_q}$ of complicated $p$-ary lattice terms. They should avoid that, for $i\in\set{1,\dots,q}$, the outmost operation symbol in $w_i$ is $\wedge$ and $w_i(\ves)$ is meet irreducible (or it has only few upper covers), and dually, and similarly for most of the subterms of $w_i$. 
In particular, $w_i(\ves)\in\set{0,1}$ should not happen.

To exemplify our ideas that come below, consider the (short) lattice term 
\begin{equation}  
x_4\Bigl(x_5+\Bigl(\bigl((x_1x_8+x_2x_3)\cdot(x_4x_5+x_3x_6)\bigr)+\bigl(x_2x_8+(x_3x_4)x_7\bigr)\Bigr)\Bigr);
\label{eqmnTltjrm}
\end{equation}
there are 15 \emph{occurrences} of variables in this term.
That is, if we represented this term by a binary tree in the usual way, then this three would have 15 leaves. 

Now, to choose a random term $w_1$, we can begin with a randomly chosen variable. Then, we iterate the following, say, a thousand times: after picking an occurrence of a variable in the already constructed term randomly (we denote this occurrence by $x_i$), selecting two of the $p$ variables, and  picking one of the two operations symbols, we replace $x_i$ by the meet or the join of the two variables selected, depending on which operations symbol has been picked. 

When choosing the two variables and the operation symbol mentioned above, we can exclude  that the replacement immediately ``cancels by the absorption laws''. (Or, at least, we have to be sure that this does not happen too often.) 
For example, it seems to be reasonable to forbid that  $x_6$ in \eqref{eqmnTltjrm} is replaced  by $x_3+x_7$. Although we can choose the occurrence mentioned in the previous paragraph according to the even distribution, it can be advantageous to go after a distribution that takes the \emph{depths} of the occurrences into account somehow. 

If $q=p$, which is recommended, then it is desirable that $\vew(\ves)$ should be far from $\ves$ and, in addition, each of the 
$w_1(\ves)$, \dots, $w_q(\ves)$ should be far from each other, from $0=0_L$, $1$, and from  $s_1$, \dots, $s_p$. By ``far'', we mean that the usual graph theoretical \emph{distance} in the Hasse diagram of $L$ or that of $L^q$ is larger than a constant.  Hence, while developing $w_1$ randomly, one can monitor $w_1(\ves)$ and interfere into the random process from time to time if necessary.

If $L$ is from \eqref{eqZhgRtBfktszg} or \eqref{eqdmzmgrYjHBbsgnwsztRtnk}, then $L$ is a semimodular lattice, so any two maximal chains of $L$ consist of the same number of elements. 
In this case, the above-mentioned distance  of $x,y\in L$ can be computed quite easily; see for example Cz\'edli, Powers, and White~\cite[equation (1.8)]{czpw}. Namely, the distance of $x$ and $y$ is
\begin{equation}
\distance x y =\length([x,x+y])+\length([y,x+y]).
\label{eqdlanghshsRw}
\end{equation}
Since any two maximal chains of $\Equ n$ are of the same size, it follows easily that $\length([x,x+y])$ is the difference of the number of $x$-blocks and the number of $(x+y)$-blocks, and similarly for 
$\length([x,x+y])$.

Several questions about the strategy remains open but future experiments with computer programs can lead to satisfactorily answers. However, even after obtaining good answers, the reliability of the above-described protocol would still remain the question of belief in some extent. This is not unexpected, since many modern cryptographic and similar protocols rely on the belief that certain problems, like factoring an integer or computing discrete logarithms, are hard.

Besides authentication, our method is also good for \emph{cryptography}. Assume that \aph{} and the Bank have previously agreed in $\ves$; see \eqref{eqphndThbnhnSpLSr}.
Then one of them can send a random $\vew$ to the other. 
They can both compute $\vew(\ves)$, see \eqref{eqszhnKhKypNxGqr}, but the Adversary cannot since even if he intercepts $\vew$, he does not know $\ves$. Hence, \aph{} and the Bank can use  $\vew(\ves)$  as the secret key of a classical cryptosystem like Vernam's. Such a secret key cannot be used repeatedly many times but \aph{} and the Bank can select a new $\vew$ and can get a new key $\vew(\ves)$ as often as they wish.

Next, we conjecture that  \aph{} can lock a \emph{commitment} $\ves$ by making $\vew(\ves)$ public.
To be more precise, the protocol is that there is a Verifier who chooses $\vew$, and then \aph{} computes  $\ver=\vew(\ves)$ 
with the Verifier's $\vew$ and makes this $\ver$ public. 
From that moment, \aph{} cannot change his commitment $\ves$, nobody knows what this $\ves$ is, but armed with $\vew$ and $\ver$, everybody can check \aph{} when he reveals $\ves$. Possibly, some stipulations should be tailored to $\ves$ and $\vew$ in this situation.


\begin{equation}
\lower  0.8 cm
\vbox{\tabskip=0pt\offinterlineskip
\halign{\strut#&\vrule#\tabskip=1pt plus 2pt&
#\hfill& \vrule\vrule\vrule#&
\hfill#&\vrule#&
\hfill#&\vrule#&
\hfill#&\vrule#&
\hfill#&\vrule#&
\hfill#&\vrule#&
\hfill#&\vrule\tabskip=0.1pt#&
#\hfill\vrule\vrule\cr
\vonal\vonal\vonal\vonal
&&\hfill$n$&&$\,4$&&$\,5$&&$\,6$&&$\,7$&&$8$&&$9$&
\cr\vonal\vonal
&& \hfill $|\Part n|$ &&$15$&&$52$&&$203$&&$877$&&$4\,140$&&$21\,147$&
\cr\vonal
&&\hfill$|\forall$8-sets$|$&&$6435$&&$7.53 \cdot 10^8$&&$6.22\cdot 10^{13}$&&$8.41\cdot 10^{18}$&&$2.13\cdot 10^{24}$&&$9.91\cdot 10^{29}$&
\cr\vonal
&&\hfill$|$tested$|$&&$100\,000$&&$10\,000$&&$10\,000$&&$6000$&&$1000$&&$284$&
\cr\vonal
&&\hfill$|$found$|$&&$89\,780$&&$7\,690$&&$7913$&&$5044$&&$848$&&$248$&
\cr
\vonal
&&\hfill \% &&$89.78$&&$76.90$&&$79.13$&&$84.01$&&$84.80$&&$90.19$&
\cr
\vonal\vonal\vonal\vonal
}} 
\label{tableD8gens}
\end{equation}

Finally, we have developed and used a computer program to see if there are sufficiently many $8$-element generating subsets and $n$-element generating sets of $\Part n$. 
This program, written in Bloodshed Dev-Pascal
v1.9.2 (Freepascal) under Windows 10 and partially in 
Maple V. Release 5 (1997),  is available from the author's website; see the list of publications there.
The results obtained with the help of this program are reported in Tables~\ref{tableD8gens} and \ref{tablenFpgFns}.  The first, \dots,  sixth rows in Tables~\ref{tableD8gens} give 
the size $n$ of the base set, 
the size of $\Part n$, 
the number of  8-element subsets of $\Part n$, 
the number of randomly selected 8-element subsets, 
the number of those selected 8-element subsets that generate $\Part n$, 
and the percentage of these generating 8-element subsets with respect to the number of the selected 8-element subsets, respectively. These subsets were selected independently according to the uniform distribution; a subset could be selected more than once.  
Table~\ref{tablenFpgFns} is practically the same but the $n$-element (rather than 8-element) subsets generating $\Part n$ are counted in it.
\begin{equation}
\lower  0.8 cm
\vbox{\tabskip=0pt\offinterlineskip
\halign{\strut#&\vrule#\tabskip=1pt plus 2pt&
#\hfill& \vrule\vrule\vrule#&
\hfill#&\vrule#&
\hfill#&\vrule#&
\hfill#&\vrule#&
\hfill#&\vrule#&
\hfill#&\vrule#&
\hfill#&\vrule\tabskip=0.1pt#&
#\hfill\vrule\vrule\cr
\vonal\vonal\vonal\vonal
&&\hfill$n$&&$\,4$&&$\,5$&&$\,6$&&$\,7$&&$8$&&$9$&
\cr\vonal\vonal
&& \hfill $|\Part n|$ &&$15$&&$52$&&$203$&&$877$&&$4\,140$&&$21\,147$&
\cr\vonal
&&\hfill$|\forall n$-sets$|$&&$1365$&&$2\,598\,960$&&$9.2\cdot 10^{10}$&&$7.73\cdot 10^{16}$&&$2.13\cdot 10^{24}$&&$2.33\cdot 10^{33}$&
\cr\vonal
&&\hfill$|$tested$|$&&$100\,000$&&$10\,000$&&$10\,000$&&$10000$&&$1000$&&$\phantom{f}$&
\cr\vonal
&&\hfill$|$found$|$&&$89\,780$&&$1430$&&$3918$&&$6811$&&$848$&&$\phantom{f}$&
\cr
\vonal
&&\hfill \% &&$89.78$&&$14.30$&&$39.18$&&$68.11$&&$84.80$&&$\phantom{f}$&
\cr
\vonal\vonal\vonal\vonal
}} 
\label{tablenFpgFns}
\end{equation}

Computing the last column of Table~\ref{tableD8gens} took 73 hours for a desktop computer
with AMD Ryzen 7 2700X Eight-Core Processor 3.70 GHz; this explains that no more 8-element subsets have been tested for Table~\ref{tableD8gens} and the last column of Table~\ref{tablenFpgFns} is partly missing. After computing the columns for $n=4$ and $n=5$ in Tables~\ref{tableD8gens} and \ref{tablenFpgFns}, we expected that the number in the percentage row (the last row) would decrease as $n$ would decrease as $n$ grows. To our surprise, the opposite happened. 
Based on these two tables, we guess that $p=n$ should be and even $p=8$ could be appropriate in the protocol if $n=273$ and
$L$ is taken from \eqref{eqZhgRtBfktszg}.

\subsection*{Chronology and comparison, added on \chronologydatum}
The first version of the present paper
was uploaded 
to \texttt{https://arxiv.org/abs/2004.14509}
on April 29, 2020.
A related \emph{second paper} dealing with direct products rather than direct powers
was completed
and  uploaded to
\texttt{http://arxiv.org/abs/2006.14139}
on June 25, 2020.  
(This second paper pays no attention to  authentication and cryptography.)  
The present paper corrects few typos and minor imperfections but it is not significantly different from its April 29, 2020 version. Although a particular case of the second paper also tells something on four-generation of direct powers of finite partition lattices, the present paper, yielding larger exponents and paying attention to $(1+1+2)$-generation, tells more. For example, while the four-generability of $\Part {2020}^{10^{127}}$ is almost explicit in the second paper and the maximum we can extract from \emph{that} paper is approximately the  four-generability
 of  $\Part {2020}^{10^{604}}$, the last column of Table \eqref{tablerdDbsg} in the \emph{present} paper guarantees a significantly larger exponent, $5.5194232\cdot10^{3893}$. (The corresponding value,  $5.52\cdot10^{3893}$, from Table \eqref{tablerdDbsg} was obtained by rounding up.)


\begin{thebibliography}{99}



\bibitem{burrsankapp}
S.  Burris,   H.P.  Sankappanavar:    A  Course  in  Universal  Algebra,   Graduate  Texts  in
Mathematics,  vol.  78,  Springer-Verlag,  1981.  The  Millennium  Edition:
http://www.math.uwaterloo.ca/\textasciitilde{}snburris/htdocs/ualg.htm

\bibitem{canfield}
Canfield, E. Rodney: On a problem of Rota.
Advances in Math. \tbf{20}, 1--10 (1978)

\bibitem{chcz}
  Chajda, I., Cz\'edli, G.: 
  How to generate the involution lattice of quasiorders? \emph{Studia Sci. Math. Hungar.} \tbf{32}: 415--427(1996)


\bibitem{czedlismallgen}
   Cz\'edli, G.: 
   Lattice generation of small equivalences of a countable set. \emph{Order} \tbf{13}: 11--16  (1996)

\bibitem{czedlifourgen}
   Cz\'edli, G.:
   Four-generated large equivalence lattices. \emph{Acta Sci. Math. (Szeged)} \tbf{62}: 47--69 (1996)


\bibitem{czedlioneonetwo}
   Cz\'edli, G.:
   (1+1+2)-generated equivalence lattices. \emph{J. Algebra} \tbf{221}: 439--462 (1999)

\bibitem{czgfourgeneqatoms}
 Cz\'edli, G.:
 Four-generated quasiorder lattices and their atoms in a four-generated sublattice. \emph{Communications in Algebra} \tbf{45}  (2017) 4037-4049


\bibitem{czgkulin}
 Cz\'edli, G. , Kulin, J.:
 A concise approach to small generating sets of lattices of quasiorders and transitive relations. 
Acta Sci. Math. (Szeged) \tbf{83} (2017),  3--12.


\bibitem{czpw}
G. Cz\'edli, C. Powers, and Jeremy M. White: Medians are below joins in semimodular lattices of breadth 2.
http://arxiv.org/abs/1911.02124




\bibitem{ggeneral}
   Gr\"atzer, G.: 
  General lattice theory,  Birkh\"auser, Basel-Stuttgart (1978)

\bibitem{ggglt}
   Gr\"atzer, G.: 
  Lattice Theory: Foundation. Birkh\"auser Verlag, Basel (2011)


\bibitem{kulin}
  Kulin, J.:
  Quasiorder lattices are five-generated.
  \emph{ Discuss. Math. Gen. Algebra Appl.} \tbf{36}: 59--70  (2016)   



\bibitem{nationbook}
Nation, J. B.: Notes on Lattice Theory.
www.math.hawaii.edu/\textasciitilde{}jb/books.html


\bibitem {pudlaktuma}
 Pudl\'ak, P.;  T\r uma, J.:
 Every finite lattice can be embedded in a finite partition lattice. Algebra Universalis \tbf{10}, 74--95 (1980)


\bibitem{renniedobson}
 Rennie, B. C.; Dobson, A. J.:
 On Stirling numbers of the second kind. 
  J. Combinatorial Theory \tbf{7} (1969), 116--121.


\bibitem{strietz1}
   Strietz H. (1975):
   Finite partition lattices are four-generated. In: \emph{Proc. Lattice Th. Conf. Ulm, 1975}, pp.\ 257--259.

\bibitem{strietz2}
   Strietz H.:
   \"Uber Erzeugendenmengen endlicher Partitionverb\"ande.
   \emph{Studia Sci. Math. Hungarica} \tbf{12}:1--17 (1977)

\bibitem{takach}
  Tak\'ach, G.: 
  Three-generated quasiorder lattices. 
  \emph{Discuss. Math. Algebra Stochastic Methods} \tbf{16}: 81--98  (1996)



\bibitem{zadori}
Z\'adori, L.:
 Generation of finite partition lattices.  In: \emph{Lectures in universal algebra. 
 (Proc. Colloq. Szeged, 1983)}
Colloq. Math. Soc. J\'anos Bolyai, Vol. 43. 
Amsterdam: North-Holland, pp.  573--586  (1986)

\end{thebibliography}
\end{document}